 \newcommand{\obra}[3]{{\sc #1} {\em #2}. {#3}.}
\newtheorem{theorem-main}{Theorem}
\newtheorem{theorem}{Theorem}
 \newtheorem{lemma}[theorem]{Lemma}
 \newtheorem{proposition}[theorem]{Proposition}
 \newtheorem{fact}[theorem]{Fact}
 \newtheorem{corollary}[theorem]{Corollary}
 \theoremstyle{definition}
 \newtheorem{definition}[theorem]{Definition}
 \newtheorem{remark}[theorem]{Remark}
 \newtheorem{remarks}[theorem]{Remarks}
 \newtheorem{example}[theorem]{Example}
\theoremstyle{remark}
\newtheorem*{rmk}{Remark}
 \newcommand{\R}{\mathbb{R}}
 \newcommand{\N}{\mathbb{N}}
  \newcommand{\C}{\mathbb{C}}
 \renewcommand{\S}{\mathbb{S}}
 \newcommand{\FF}{\mathcal{F}}
  \newcommand{\OO}{\mathcal{O}}
 \newcommand{\CC}{\mathcal{C}}
   \renewcommand{\SS}{\mathcal{S}}
   \newcommand{\DD}{\mathcal{D}}
        \newcommand{\II}{\mathcal{I}}
\newcommand{\VV}{\mathcal{V}}
\newcommand{\BB}{\mathcal{B}}
 \renewcommand{\AA}{\mathcal{A}}
\newcommand{\trg}{|\g|}
\newcommand{\tr}[1]{\left|#1\right|}
\newcommand{\g}{\gamma}
\DeclareMathOperator{\an}{an}
\DeclareMathOperator{\ord}{ord}
\DeclareMathOperator{\gr}{gr}
\DeclareMathOperator{\trace}{trace}
\DeclareMathOperator{\diag}{diag}
\newcommand{\into}{\longrightarrow}
\newcommand{\rest}[1]{\!\!\upharpoonright_{#1}}
\newcommand{\Ps}[2]{\mathbb{#1}[\![#2]\!]}
\author{Olivier Le Gal}
\address{Universit\'e de Savoie \\ Laboratoire de Math\'ematiques \\ B\^atiment Chablais \\ Campus Scientifique \\ 73376 Le Bourget-du-Lac Cedex \\ France}
\email{Olivier.Le-Gal@univ-savoie.fr}
\author{Fernando Sanz}
\address{Universidad de Valladolid \\ Departamento de \'Algebra, An\'alisis Matem\'atico, Geometr\'ia y Topolog\'ia \\ Facultad de Ciencias \\ Campus Miguel Delibes \\ Paseo de Bel\'en, 7 \\ E-47011 Valladolid \\ Spain}
\email{fsanz@agt.uva.es}
\author{Patrick Speissegger}
\address{Department of Mathematics and Statistics \\
McMaster University \\
1280 Main Street West \\
Hamilton, Ontario L8S 4K1 \\
Canada}
\email{speisseg@math.mcmaster.ca}
\subjclass[2010]{34C08, 03C64, 34M30}
\date{\today}
\keywords{Ordinary differential equations, o-minimal structures, multisummable series, Stokes phenomena}
\title[O-minimality of interlaced trajectories]{Trajectories in interlaced integral pencils of 3-dimensional
analytic vector fields are o-minimal}
\begin{document}
	
	\begin{abstract}
		Let $\xi$ be an analytic vector field at $(\R^3,0)$ and $\II$ be an analytically
		non-oscillatory integral pencil of $\xi$; i.e., $\II$ is a maximal family of analytically non-oscillatory trajectories of $\xi$ at 0 all sharing the same iterated tangents. We prove that if
		$\II$ is interlaced, then for any trajectory $\Gamma\in\II$, the expansion
		$\R_{\an,\Gamma}$ of the structure $\R_{\an}$
		by $\Gamma$ is model-complete, o-minimal and polynomially bounded.
	\end{abstract}
	
	\maketitle
	
	\section{Introduction}
	We fix a real analytic vector field $\xi$ in a neighborhood
	$U$ of the origin $0 \in \R^n$, with $n \ge 2$, and suppose that $\xi(0)=0$. We are
	interested in solutions of $\xi$ of the form $\g:(a,b] \into U$ with $a \in [-\infty,b)$ and $b \in \R$; however, we are not
	interested in any particular parametrization of such a solution $\gamma$ but
	only in its image $$\trg:=\{\g(t):\ a < t \le b\},$$ which we will
	call a \textbf{trajectory of $\xi$}.  If, in addition, $\lim_{t \to a} \gamma(t) = 0$, we call $\trg$ a \textbf{trajectory at 0 of $\xi$}.  As in our more elementary paper \cite{LeG-San-Spe}, we are interested in the following vague questions:
	\begin{itemize}
		\item [(a)] What is the relative behavior between distinct trajectories at 0 of $\xi$?
		\item [(b)] What finiteness properties, relative to a given family of sets, do trajectories at 0 of $\xi$ have?
	\end{itemize}
	To make these questions precise in the cases considered here and to state our theorem, we need to recall, in the next two paragraphs, some terminology and results from Cano, Moussu and Sanz \cite{Can-Mou-San1, Can-Mou-San2}.  We assume the reader to be familiar with \textit{semianalytic} and \textit{subanalytic} sets (see for instance Bierstone and Milman \cite{Bierstone:1988ph}).
	
	Let $\g:(a,b] \into \R^n$ be a differentiable curve; for $c \in (a,b]$, we set $$\trg_c:= \{\g(t):\ a < t \le c\}.$$  We call $\g$ and its image $\trg$  \textbf{analytically non-oscillatory} if,
	for every semianalytic $A \subseteq \R^n$, there exists $c \in (a,b]$ such that either $\trg_c \subseteq A$
	or $\trg_c \cap A = \emptyset$.  Thus, one way to make question (b) precise is to ask, as done in \cite{Can-Mou-San1}, whether a given trajectory at 0 of $\xi$ is analytically non-oscillatory (simply called ``non oscillante'' there).  In \cite{Can-Mou-San1}, the notion of analytical non-oscillation is compared to the following: assume $\gamma(t) \ne 0$ for $t \in (a,b]$ and $\lim_{t \to a} \gamma(t) = 0$, and let 
	$\g_1:=\pi_1^{-1}\circ\g$ be the lifting of $\g$ via the blowing-up $\pi_1:M_1\to\R^n$ with center the
	origin $p_0=0$.  If $\gamma_1$ has a single limit point
	$p_1\in\pi_1^{-1}(p_0)$ as $t\to\infty$, we say that $\gamma$ \textbf{has tangent $p_1$ at} the origin.  We say that \textbf{$\g$ has iterated tangents at} the origin if, for $k \in \N$, there are differentiable curves $\gamma_k:(a,b] \into M_k$ and points $p_k \in M_k$ such that $M_0 = \R^n$, $\gamma_0 = \gamma$, $p_0 = 0$ and, for $k > 0$, $\gamma_k$ is the lifting of $\gamma_{k-1}$ via the blowing-up $\pi_k:M_k \into M_{k-1}$ with center $\{p_{k-1}\}$ and $\lim_{t \to a} \gamma(t) = p_k$.  In this situation, the \textbf{sequence of iterated tangents} $(p_k)_{k \in \N}$ thus obtained is
	uniquely determined by the image $\trg$.  By \cite[Section 1.2]{Can-Mou-San1}, if $\trg$ is analytically non-oscillatory, then $\gamma$ has iterated tangents; the converse is false in general, even if $n = 3$ and $\trg$ is a trajectory of $\xi$ at the origin \cite[Th\'eor\`eme 1]{Can-Mou-San1}. 
	
	%If $\mid\gamma\mid$ is a subanalytic set ($\gamma$ is a parametrization of a branch of an analytic curve) then clearly it has iterated tangents. Moreover, in this case, the irreducible analytic curve $C_\gamma$ containing$\mid\gamma\mid$ is univocally determined by the sequence of iterated tangents of $\gamma$. In fact, it can be computed from a  parametrization of $C_\gamma$ of the form $t\mapsto h(t), t\in(-\varepsilon,\varepsilon)$ where $h$ is an $n$-tuple of convergent power series in the variable $t$. In a similar way, it is convenient to declare that a (irredicible) formal curve $\hat{C}$ at the origin, given by a (formal) parametrization $\hat{h}(t)\in\mathbb{R}[[t]]$, has iterated tangents, and that the sequence of iterated tangents determines $\hat{C}$.
	
	The notions of the previous paragraph make sense for any $n \ge 2$.  To make sense of question (a) in the case $n=3$, we recall the following definitions from \cite{Can-Mou-San2}: let $\gamma, \gamma':(a,b] \into \R^3$ be  
	two analytically non-oscillatory, differentiable curves such that $\trg \cap \tr{\g'} = \emptyset$.  We say that they are \textbf{interlaced} if, for some system $(x,y,z)$ of analytic coordinates at the
	origin, there are $c,c' \ge a$, $\varepsilon > 0$ and differentiable functions $u,v,u',v':(0,\varepsilon] \into \R$ such that 
	$\trg_c=\{(x,u(x),v(x)):\ 0 < x \le \varepsilon\}$ and $\tr{\g'}_{c'}=\{(x,u'(x),v'(x)):\ 0 < x \le \varepsilon\}$, and such that the vector $(u(x)-u'(x),v(x)-v'(x))\in\R^2$ spirals
	around the origin as $x\to 0$. We say that $\trg,\tr{\g'}$ are
	\textbf{subanalytically separated} if there exists a subanalytic map
	$\sigma$ from a neighborhood of $\trg\cup\tr{\g'}$ into $\R^2$ such
	that $\sigma(\trg)\cap\sigma(\tr{\g'})$ is a finite set of points.
	
	The main result of \cite{Can-Mou-San2} relates these two notions in the following situation: an \textbf{integral pencil at 0 of $\xi$}
	is a maximal collection of trajectories at 0 of $\xi$ all having the same sequence of iterated tangents.  We call an integral pencil $\II$ at 0 of $\xi$ \textbf{analytically non-oscillatory} if every trajectory of $\II$ is analytically non-oscillatory.  In \cite[Th\'eor\`eme 1]{Can-Mou-San2} it is proved that, if $\II$ is an analytically non-oscillatory integral
	pencil at 0 of $\xi$, then either every pair of disjoint trajectories in $\II$ is interlaced, in which case we call $\II$ an \textbf{interlaced
		pencil}, or every pair of disjoint trajectories in $\II$ is subanalytically separated, in which case we call $\II$ a \textbf{subanalytically separated
		pencil}.
	%Moreover, in the first case, the interlaced pencil I corresponds to a formal curve $\hat{C}$, i.e. the sequence of iterated tangents of any of its members is the sequence of iterated tangents of $\hat{C}$. Moreover, $\hat{C}$ is transcendental in the sense that if $\hat{h}(t))$ is a formal parametrization of $\hat{C}$ then for every non-zero analytic function $f(x)$ in $n$ variables we have $f(\hat{h}(t)\not\equiv 0$.
	%Assume that $\gamma$ is analytically non-oscillatory; then there exist $b \ge a$, $\varepsilon > 0$, a system $(x,y)$ of analytic coordinates at the origin, with $y = (y_1, \dots, y_{n-1})$, and there exists a differentiable curve $u:(0,\varepsilon] \into \R^{n-1}$ such that $\trg_b = \{(x,u(x)):\ 0 < x \le \varepsilon\}$.  
	
	For our theorem, we assume the reader to be familiar with the basics of \textit{o-minimal structures} (see van den Dries and Miller \cite{Dries:1996kf}); in particular, we will be working with the o-minimal structure $\R_{\an}$, whose definable sets are the \textit{globally subanalytic} sets.  For a trajectory $\Gamma$ at 0 of $\xi$, we let $\R_{\an,\Gamma}$ be the expansion of $\R_{\an}$ by $\Gamma$.  Clearly, the \hbox{o-minimality} of $\R_{\an,\Gamma}$ implies that $\Gamma$ is analytically non-oscillatory.  The converse is not true in general: while Rolin, Sanz and Sch\"afke \cite{Rol-San-Sch} give, in any dimension $n$, criteria for (and specific examples of) $\xi$ and analytically non-oscillatory trajectories $\Gamma$ at 0 of $\xi$ that imply the o-minimality of $\R_{\an,\Gamma}$, they also exhibit a particular $\xi$ in $\R^5$ with an analytically non-oscillatory trajectory $\Gamma$ at 0 such that $\R_{\an,\Gamma}$ is not o-minimal.  The question of whether counterexamples of the latter kind exist in $\R^3$ or $\R^4$ remains open, and our main theorem can be viewed as a partial result towards showing that no such counterexamples exist in $\R^3$:
	\medskip
	
	\noindent\textbf{Main Theorem.} 
	\textit{Let $\,\II$ be an interlaced, analytically non-oscillatory integral pencil at 0 of an analytic vector field $\xi$ on $\R^3$, and let
		$\,\Gamma$ be a trajectory of $\,\II$. Then the expansion
		$\mathbb{R}_{\an,\Gamma}$ of $\,\R_{\an}$ by $\Gamma$ is model complete, o-minimal and polynomially bounded.}
	\medskip
	
	Let $\II$ be an analytically non-oscillatory integral pencil at 0 of $\xi$.  An even stronger criterion than o-minimality of $\R_{\an,\Gamma}$, for individual trajectories $\Gamma \in \II$, is that of o-minimality of the expansion $\R_{\an,\II}$ of $\R_{\an}$ by \textit{all} trajectories in  $\II$.  For instance, if $n=2$, then $\R_{\an,\II}$ is o-minimal, because non-oscillatory trajectories at 0 of $\xi$ are
	pfaffian sets in this case, see Lion and Rolin \cite{Lio-Rol} or Speissegger \cite[Example 1.3]{Spe}. If $n=3$, however, the o-minimality of $\R_{\an,\II}$  and \cite[Th\'eor\`eme 1]{Can-Mou-San2} imply that $\II$ is subanalytically separated since, by its very
	definition, two interlaced trajectories cannot be definable in the
	same o-minimal structure. Thus, the Main Theorem above is the best we can hope for if $\II$ is interlaced.
	
	If $\II$ is subanalytically separated, we do not know what happens in general.  For the record, in \cite{LeG-San-Spe} we consider this problem in the case where $\xi$ arises from a system of two linear ODEs with meromorphic  coefficients
	\begin{equation*}
	y'=A(x)y + B(x), \quad y=(y_1,y_2).
	\end{equation*}
	In this situation, we obtain from \cite[Theorem 4]{LeG-San-Spe} that if $\II$ is a subanalytically separated integral pencil at $0$ of $\xi$, then the expansion of $\R_{\an}$ by all trajectories in $\II$ is o-minimal.
	
	The proof of the Main Theorem goes as follows:
	in Section \ref{reduce}, we use a result in \cite{Can-Mou-San2} to reduce to the situation where the vector field $\xi$ arises from a two-dimensional system of differential equations in \textit{final form}.  Basic ODE theory then gives the existence of a formal power series solution $H(X)$ of this system to which the trajectories $\Gamma$ we are interested in are asymptotic.  In this situation, a result of Rolin, Sanz and Sch\"afke \cite{Rol-San-Sch} states that $\R_{\an,\Gamma}$ is o-minimal provided $H(X)$ satisfies the so-called SAT property (see Section \ref{rss}).  Thus, similar to \cite{Rol-San-Sch}, it remains to establish this SAT property of $H(X)$.  In \cite{Rol-San-Sch}, this was achieved under the additional assumption that $\xi$ has sufficiently many \textit{independent} (over the non-flat germs) components of Stokes phenomena (see Sections \ref{multi} and \ref{multisummability} for definitions).  The main contribution of this paper is the independence proof of the components of the Stokes phenomena in the situation considered here, from which we then obtain the SAT property along the lines of \cite{Rol-San-Sch}, carried out in Section \ref{sat}.  This independence proof, in turn, is based on a further reduction to what we call ``interlaced final form'' (Proposition \ref{int-final-form-reduction}), as well as on multisummability theory, see Example \ref{explicit_stokes} and Proposition \ref{pro:multisum}.

	\section{Reduction to interlaced final form} \label{reduce}
	
	\subsection*{Systems of ODEs}
	To describe the first reduction in the proof of our Main Theorem, we work in the following setting: we fix $q \in \N$ and nonzero $n \in \N$ and consider an $n$-dimensional system of ordinary differential equations of the form
	\begin{equation}
	\label{eq:system-gen}
	x^{q+1} y'(x) = \Theta(x,y(x)),
	\end{equation}
	where $y \in \R^n$ and $\Theta:V \into \R^n$ is real analytic in some neighbourhood $V$ of $0 \in \R^{1+n}$.  A \textbf{solution at 0 of \eqref{eq:system-gen}} is a differentiable map $y:(0,\epsilon] \into \R^n$, for some $\epsilon > 0$, such that $\gr y \subseteq V$ and $y$ satisfies \eqref{eq:system-gen} for $0 < x \le \epsilon$.  A \textbf{formal solution at 0 of \eqref{eq:system-gen}} is an $n$-tuple $H \in \Ps{R}{X}^n$ such that $(0,H(0)) \in V$ and $$X^{q+1} H'(X) = (T_{(0,H(0))}\Theta)(X,H(X)-H(0)),$$ where $T_a\Theta \in \Ps{R}{X,Y}$ denotes the Taylor series of $\Theta$ at $a \in V$ and $Y = (Y_1, \dots, Y_n)$.
	
	\begin{rmk}
		The integer $q$ is equal to the Poincar\'{e} rank of system \eqref{eq:system-gen} if $T_{(0,H(0))}\Theta$ is not divisible by $X$ in $\Ps{R}{X,Y}$.
	\end{rmk}
	
	Let $\eta = -x^{q+1 }\partial_x - \Theta(x,y) \cdot \partial_y$ be the real analytic vector field, defined in a neighbourhood of $0 \in \R^{1+n}$, associated to  system \eqref{eq:system-gen}, where $\partial_y = (\partial_{y_1}, \dots, \partial_{y_n})$.  Then the graph of any solution $h$ at $0$ of system \eqref{eq:system-gen} is a trajectory $\Gamma$ of $\eta$.
	
	\begin{rmk}
		This $\Gamma$ is not necessarily a trajectory at 0 of $\eta$; indeed, the graph of $h$ above is a trajectory at 0 of $\eta$ if and only if $\lim_{t \to 0^+} h(t)=0$. 
	\end{rmk}
	
	Thus, we call a solution $h$ at $0$ of system \eqref{eq:system-gen} \textbf{analytically non-oscillatory} if its graph $\gr h$ is analytically non-oscillatory.  In addition, if $n=2$, we call a pair $(g,h)$ of solutions at $0$ of system \eqref{eq:system-gen} \textbf{subanalytically separated} (respectively, \textbf{interlaced}) if the pair of graphs $(\gr g, \gr h)$ is subanalytically separated (respectively, interlaced).  
	
	\begin{remark}
		\label{it_tang_rmk}
		Assume that system \eqref{eq:system-gen} has a formal solution $H$ at $0$.  We set $H^0:= H$, $p_0 := H(0) \in \R^n$ and $H^1(X):= (H(X)-p_0)/X \in \Ps{R}{X}^n$.  Iterating this procedure we obtain, by induction on $k \in \N$, points $p_k \in \R^n$ and tuples $H^{k} \in \Ps{R}{X}^n$ such that $p_{k} = H^k(0)$ and $H^{k+1}(X) = \left(H^{k}(X) - p_{k}\right) / X$.  If $h$ is a solution at $0$ of system \eqref{eq:system-gen} with asymptotic expansion $H$ at 0, then this computation corresponds to the computation of the iterated tangents of the graph of $h$ in suitable charts at each stage of blowing up.  Therefore, a solution $h$ at $0$ of system \eqref{eq:system-gen} has asymptotic expansion $H$ at 0 if and only if the graph of $h$ has iterated tangents at $0$ determined by $H$ through the above computation (see the recent survey \cite{San} for details).
	\end{remark}
	
	Thus, we call \textbf{integral pencil at $0$ of system \eqref{eq:system-gen}} any maximal collection of solutions at 0 of system \eqref{eq:system-gen} all having the same asymptotic expansion at 0.  In particular, if the system \eqref{eq:system-gen} has a formal solution $H$ at $0$, we denote by $\II(H)$ the integral pencil of system \eqref{eq:system-gen} consisting of all solutions at 0 of \eqref{eq:system-gen} asymptotic to $H$.  
	
	In addition, if $n=2$, we call an integral pencil $\II$ at $0$ of system \eqref{eq:system-gen} \textbf{analytically non-oscillatory} if every solution in $\II$ is analytically non-oscillatory, and we call $\II$ \textbf{subanalytically separated} (respectively, \textbf{interlaced}) if every pair of distinct solutions in $\II$ is subanalytically separated (respectively, interlaced).  
	
	\begin{rmk}
		%\label{formal_ip_rmk}
		It follows from Remark \ref{it_tang_rmk} that, if $h$ is a solution at $0$ of system \eqref{eq:system-gen} with asymptotic expansion $H$ and $\II$ is the integral pencil containing $h$, then $\II = \II(H)$.  
	\end{rmk}
	
	\subsection*{The reduction}
	We assume for the remainder of this section that $n=2$.  Following \cite[D\'efinition 4.2]{Can-Mou-San1}, we say that system \eqref{eq:system-gen} is in \textbf{final form} if $q \ge 1$ and
	\begin{equation}\label{eq:system-normal-form}
	\Theta(x,y)=\big(a(x)I+x^rM(x)\big)y + x^{q+1}g(x,y),
	\end{equation}
	where $0 \le r\leq q+1$ (this $r$ corresponds to the ``indice de radialit\'e'' $k(X)$ in \cite[D\'efinition 4.2]{Can-Mou-San2}), $g$ is real analytic in some neighbourhood of $0$, $a(x)$ is a polynomial of degree at most $r-1$ (with $a(x) = 0$ if $r = 0$), $I$ is the identity matrix, and $M(x)$ is a matrix of polynomials of degree at most $q-r$ (with $M(x) = 0$ if $r > q$), such that the matrix $A(x):= a(x)I + x^r M(x)$ has at least one nonzero eigenvalue at $x=0$ and $M(0)$ has two distinct eigenvalues if $r \le q$.  With these notations, the case $r=q+1$ corresponds to 
	$$\Theta(x,y) = a(x) Iy + x^{q+1}g(x,y),$$
	that is, to a system whose ``indice de radialit\'e'' is bigger than the Poincar\'e rank $q$.

	Assume that system \eqref{eq:system-gen} is in final form \eqref{eq:system-normal-form}.  The hypothesis on the eigenvalues of $A(x)$ at $x=0$ then imply (by a routine calculation as found, for instance, in Chow and Hale \cite[Chapter 12, Theorem 3.7]{Cho-Hal}) that there exists a unique formal solution $H$ at 0 of system \eqref{eq:system-gen} such that $H(0) = 0$.  Moreover, by Bonckaert and Dumortier \cite[Theorem 2.1]{Bon-Dum}, there exists a solution $h$ at 0 of system \eqref{eq:system-gen} with asymptotic expansion $H$ at 0; in particular, $\II(H)$ is nonempty.

	\begin{fact}[\cite{Can-Mou-San2}, Th\'eor\`emes 4.3 and 4.5]
		\label{pro:separated-or-interlaced} 
		Assume that system \eqref{eq:system-gen} is in final form \eqref{eq:system-normal-form}, and let $H$ be its unique formal solution at $0$ satisfying $H(0) = 0$.  Then $\II(H)$ is analytically non-oscillatory and interlaced if and
		only if $H$ is divergent and the following holds:
		\begin{align}
		\label{eq:can-mou-san} 
		\begin{split}
		&M(0) \text{ has non-real eigenvalues and }\\ &\trace A(x)=\alpha x^l + O(x^{l+1}) \text{ for some } l<q \text{ and } \alpha>0.
		\end{split}
		\end{align}
		Moreover, in this situation, the integral pencil $\II(H)$ consists of all solutions $h$ at 0 satisfying $\,\lim_{x\to 0^+}h(x)=0$.
	\end{fact}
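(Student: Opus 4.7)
The plan is to reduce to the variational equation $x^{q+1}z'=A(x)z$ by substituting a high-order polynomial truncation of the formal solution $H$, and then to read the three bulleted hypotheses as dynamical properties of that linear system. Since $H$ is uniquely determined by \eqref{eq:system-gen} and $H(0)=0$, for each $N\ge q$ there is $H_N\in\R[X]^2$ with $H-H_N\in X^{N+1}\Ps{R}{X}^2$. Setting $y = H_N(x)+z$ yields
\[
x^{q+1}z' = A(x)z + x^{q+1}g_N(x,z) + R_N(x),
\]
with $g_N$ analytic, $g_N(x,0) = O(x^{N-q})$, and $R_N(x) = O(x^N)$ flat. In the rescaled time $\tau = x^{-q}/q$, in which $x\to 0^+$ corresponds to $\tau\to +\infty$, the linear part dominates the remainders for $N$ large, and a weighted-norm contraction argument identifies the solutions of the full system with $y(x)\to 0$ with certain solutions of the variational system, modulo flat corrections.

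For the variational system, absorb the scalar piece $a(x)I$ by the substitution $z=\exp\!\bigl(\int_\varepsilon^x a(s)s^{-q-1}\,ds\bigr)w$, reducing to $x^{q+1}w' = x^r J(x) w$. In the further time variable $t = \int_\varepsilon^x s^{r-q-1}\,ds$, this becomes $w' = J(0)w$ plus an analytic perturbation that is integrable in $t$, so a fundamental matrix is asymptotic to $\exp(tJ(0))$ times a bounded analytic factor. The three conditions now read as follows: non-real eigenvalues $\mu\pm i\omega$ of $J(0)$ with $\omega\ne 0$ make $\exp(tJ(0))$ rotate by unbounded angle as $t\to+\infty$; the trace condition $\trace A(x) = \alpha x^l + O(x^{l+1})$ with $l<q$ and $\alpha>0$ forces the scalar prefactor of the solution, combined with $\exp(t\mu)$, to be flat at $x=0$ (the integral $\int \trace A(x)/x^{q+1}\,dx$ diverges to $-\infty$ as $x\to 0^+$); and divergence of $H$ prevents the pencil from collapsing to the single analytic sum of $H$. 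Non-trivial solutions $w$ therefore spiral around $0$ while contracting toward it.

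The ``if'' direction follows: for any two distinct $h_1, h_2\in\II(H)$ the difference $z = h_1-h_2$ is, modulo flat terms, a non-zero solution of the variational system and hence spirals in the $(y_1, y_2)$-plane as $x\to 0^+$, giving the interlacing of $\gr h_1$ and $\gr h_2$ in the sense of the introduction. Analytic non-oscillation of each $\Gamma\in\II(H)$ follows from the explicit exponential bounds, since only finitely many crossings of a given semianalytic set can occur before the flat contraction forces definitive entry or exit. The ``moreover'' clause reduces to showing that every solution $h$ with $h(x)\to 0$ is asymptotic to $H$; this is proved inductively using the uniqueness of the formal recursion and the flat-contractive behaviour established above, so the iterated tangents of $\gr h$ must match those determined by $H$. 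For the ``only if'' direction I would contradict each failure of the three hypotheses in turn: convergence of $H$ reduces $\II(H)$ to a single analytic solution; real eigenvalues of $J(0)$ provide real invariant directions along which two solutions approach $0$ monotonically, giving subanalytic separation by projection; and failure of the trace condition either flips the sign of the flat prefactor (so no common accumulation at $0$ is possible) or pushes the leading order to $l\ge q$, placing the system outside the interlaced regime.

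The main obstacle I foresee is the uniform control needed to pass from the linear analysis to a statement about the entire pencil. The existence of one asymptotic solution was already granted by \cite{Bon-Dum}, but here I must show that the full set of solutions with $h(x)\to 0$ is parametrised by an open subset of initial conditions on a transversal $\{x=\varepsilon\}$, that every such solution is asymptotic to $H$, and that any two produce a spiralling difference. In the rescaled time $\tau$ the flat decay is only of exponential-in-$\tau^{(q-l)/q}$ type, which is delicate relative to the polynomial-in-$\tau$ growth of the nonlinear perturbation, so the contraction estimate is tight and likely requires pre-normalising $g_N$ by eliminating finitely many resonant monomials before the iteration closes. Once this quantitative step is in place, the three hypotheses match the three dynamical phenomena (rotation, flat contraction, irremovable divergence) cleanly, and each direction of the equivalence becomes a short geometric reading of the linear picture.
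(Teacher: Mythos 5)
The paper contains no proof of this statement: it is imported as a black box from Cano, Moussu and Sanz \cite{Can-Mou-San2} (Th\'eor\`emes 4.3 and 4.5), and its only role here is to feed the reduction of Section \ref{reduce}. So what you have sketched is a re-proof of a substantial external theorem, not an alternative to an argument in this paper, and judged on its own terms the sketch has genuine gaps beyond the quantitative contraction issue you flag yourself.

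The most serious gap is the non-oscillation step. You dispose of it in one sentence (``only finitely many crossings of a given semianalytic set can occur before the flat contraction forces definitive entry or exit''), but this is false as a general principle and it is precisely the deep content of the cited theorems. A flatly contracting spiral can meet a semianalytic set infinitely often: if $H$ were convergent, its sum $h_0$ would be an analytic solution, every other $h\in\II(H)$ would spiral around $\gr h_0$ (by your own linear analysis, which does not use divergence anywhere), and $\gr h$ would then cross the semianalytic hypersurface $\{y_1=h_{0,1}(x)\}$ in every neighbourhood of $0$, i.e.\ would be analytically oscillatory even though all your exponential bounds hold. This shows both that analytic non-oscillation cannot follow from the contraction estimates alone --- in \cite{Can-Mou-San1,Can-Mou-San2} it requires the full machinery of iterated tangents and reduction of semianalytic sets, with the divergence of $H$ doing essential work --- and that your reading of the divergence hypothesis is misplaced: divergence does not ``prevent the pencil from collapsing to the single analytic sum''; under the trace condition the flow is flatly contracting, so even for convergent $H$ an open set of solutions is asymptotic to $H$ and the pencil is large. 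Consequently the ``only if'' step ``convergence of $H$ reduces $\II(H)$ to a single analytic solution'' is wrong; the correct mechanism is that convergence of $H$ destroys analytic non-oscillation (or forces subanalytic separation), as above. Until the non-oscillation half and the role of divergence are proved rather than asserted, the proposal establishes at best the spiralling/interlacing of differences and the ``moreover'' parametrisation, not the stated equivalence.
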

	
	%\begin{rmk}
	%Condition \eqref{eq:can-mou-san} makes sense for any system \eqref{eq:system-gen} in final form \eqref{eq:system-normal-form} (i.e., not only for those in strong final form).  Moreover, since $q \ge 1$, the truth value of condition (E') remains unchanged under any change of variables replacing $y$ by $y-d(x)$, for any $d \in \R[X]$.  Thus, Fact \ref{pro:separated-or-interlaced} holds with ``final form'' in place of ``strong final form'', and we shall use it in this more general context without further explicit mention. 
	%\end{rmk}
	
	To see how this fact is used towards the proof of our Main Theorem, let $\II$ be an interlaced, analytically non-oscillatory integral pencil at 0 of a given analytic vector field $\xi$.  By \cite[Proposition 5.1]{Can-Mou-San1}, there exists a polynomial map
	$\sigma:\R^3 \into \R^3$ fixing the origin (obtained by a finite composition of
	local blowings-up and ramifications), and there exists a system \eqref{eq:system-gen} in final 
	form, with unique formal solution $H$ at $0$ satisfying $H(0) = 0$, such that every
	trajectory $\trg$ in $\II$ is the image under $\sigma$ of the graph of some solution in the pencil $\II(H)$ of this system \eqref{eq:system-gen}. Since the map $\sigma$ is
	polynomial, it follows from \cite[Proposition 1.13]{Can-Mou-San1} that $\II(H)$ is non-oscillatory and interlaced.  
	
	Moreover, if $H$ is a formal solution at 0 of a system \eqref{eq:system-gen} in final form satisfying \eqref{eq:can-mou-san} and $H(0) = 0$, a routine linear change of variables $y \mapsto Ry$, where $R \in \mathcal{M}_2(\R)$, shows that $R H$ is a formal solution at 0 of a system \eqref{eq:system-gen} in final form \eqref{eq:system-normal-form} satisfying \eqref{eq:can-mou-san} and the following additional condition:
	\begin{equation}\label{eq:Jx2}
	M(0)=\begin{pmatrix} \mathfrak{a} & -\mathfrak{b} \\ \mathfrak{b} & \mathfrak{a}\end{pmatrix}, \text{ where } \mathfrak{a}, \mathfrak{b} \text{ are real and } \mathfrak{b} \neq 0.
	\end{equation}
	
	These observations lead us to the following definition: we say that system \eqref{eq:system-gen} is in \textbf{interlaced final form} if $q \ge 1$, $r \le q$ and
	\begin{equation}
	\label{eq:int-final-form}
	\Theta(x,y) = \left(a(x)I + x^r b(x)J\right) y + x^{q+1} g(x,y) + c(x),
	\end{equation}
	where $a(x) = a_0 + \cdots + a_q x^q$ is a polynomial of degree at most $q$ satisfying $a_l > 0$ for the least $l$ such that $a_l \ne 0$, $b(x) = b_0 + \cdots + b_{q-r} x^{q-r}$ is a polynomial of degree at most $q-r$ satisfying $b_0 \ne 0$, $c(x)$ is a tuple of polynomials of degree at most $q$ satisfying $c(0) = 0$, $g$ is real analytic in some neighbourhood of $0$ and $J = \begin{pmatrix} 0 & -1 \\ 1 & 0  \end{pmatrix}$.  
	
	\begin{remark}
		\label{int-final-form-rmk}
		The explanation for the additional term $c(x)$ is deferred to Remark \ref{0-sat_reduction}.  A system \eqref{eq:system-gen} in interlaced final form \eqref{eq:int-final-form} with $c(x) = 0$ is in final form \eqref{eq:system-normal-form} and satisfies conditions \eqref{eq:can-mou-san} and \eqref{eq:Jx2}.  Moreover, the arguments given before Fact \ref{pro:separated-or-interlaced} also apply to any system \eqref{eq:system-gen} in interlaced final form, i.e., for any such system, there exists a unique formal solution $H$ at 0 such that $H(0) = 0$, and there exists a solution $h$ at 0 with asymptotic expansion $H$ at 0, so that $\II(H)$ is nonempty.
	\end{remark}
	
	\begin{proposition}
		\label{int-final-form-reduction}
		Assume that system \eqref{eq:system-gen} is in final form \eqref{eq:system-normal-form} and satisfies conditions \eqref{eq:can-mou-san} and \eqref{eq:Jx2}.  Then there exist $T_1, \dots, T_q \in \mathcal{M}_2(\R)$ such that, with $$T(x):= I + x T_1 + \cdots + x^q T_q,$$ the pullback of system \eqref{eq:system-gen} via the change of variables $y = Tz$ for $z \in \R^2$ is in interlaced final form with corresponding $c(x) = 0$.
	\end{proposition}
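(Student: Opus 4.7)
The plan is to construct $T(x) = I + \sum_{k=1}^{q} x^k T_k$ inductively, solving one homological equation per step. Under the substitution $y = T(x) z$, system \eqref{eq:system-gen} becomes
\begin{equation*}
x^{q+1} z' \,=\, T(x)^{-1} A(x) T(x)\, z \,-\, x^{q+1} T(x)^{-1} T'(x)\, z \,+\, x^{q+1} T(x)^{-1} g(x, T(x) z),
\end{equation*}
and since $T(0) = I$ the matrix $T(x)^{-1}$ is analytic near $0$, so the last two summands on the right are $O(x^{q+1})$ and fold into a new analytic remainder $x^{q+1} \tilde{g}(x, z)$. The original source $\Theta(x, 0) = x^{q+1} g(x, 0)$ is itself $O(x^{q+1})$, and this is preserved by the substitution, so $c(x) = 0$ in the output system. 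It therefore suffices to choose $T_1, \dots, T_q$ so that
\begin{equation*}
T(x)^{-1} A(x) T(x) \,\equiv\, \tilde{a}(x)\, I + x^r \tilde{b}(x)\, J \pmod{x^{q+1}}
\end{equation*}
for appropriate polynomials $\tilde{a}, \tilde{b}$.

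The algebraic engine is the decomposition $\mathcal{M}_2(\R) = V \oplus W$, where $V = \R I \oplus \R J$ is the centraliser of $J$ in $\mathcal{M}_2(\R)$ and $W$ is, say, the subspace of symmetric traceless matrices. Since $J$ represents a rotation by $\pi/2$, the adjoint $\mathrm{ad}_J \colon M \mapsto [J, M]$ has kernel $V$ and restricts to a linear bijection $W \to W$. Decompose $A(x) = A_V(x) + A_W(x)$ accordingly. From \eqref{eq:system-normal-form} and \eqref{eq:Jx2}, $A_W(x)$ has no terms in degrees $\le r$: the scalar polynomial $a(x)$ contributes only to $V$, and the degree-$r$ coefficient equals $J(0) = \mathfrak{a} I + \mathfrak{b} J \in V$. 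Hence $A_W(x) \in x^{r+1} W[x]$ is of degree at most $q$.

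Build $T$ by induction for $k = 1, \dots, q-r$. Having chosen $T_1, \dots, T_{k-1}$ so that $T^{(k-1)}(x)^{-1} A(x) T^{(k-1)}(x)$ has no $W$-component in degrees $\le r+k-1$, expand $T^{(k)}(x)^{-1} A(x) T^{(k)}(x)$ and isolate the degree-$(r+k)$ coefficient. The fresh unknown $T_k$ enters linearly through $[A_r, T_k] = \mathfrak{b}[J, T_k]$: lower coefficients $A_0, \dots, A_{r-1}$ are scalar and commute with $T_k$ (for $r = 0$ the analogous bracket $[A_0, T_k] = \mathfrak{b}[J, T_k]$ appears already at degree $x^k$). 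Because $\mathfrak{b} \ne 0$ and $\mathrm{ad}_J$ surjects onto $W$, there is a unique choice of $T_k \in W$ cancelling the accumulated $W$-component at this order. The choice does not reintroduce any $W$-component at lower degrees, since the relevant commutators $[A_j, T_k]$ with $j + k \le r$ have $A_j$ scalar and hence vanish. After $q - r$ steps every $W$-component in degrees $\le q$ is killed; set $T_{q-r+1} = \cdots = T_q = 0$.

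It remains to verify the sign and degree requirements of \eqref{eq:int-final-form}. The $I$-part $\tilde{a}(x)$ of the conjugated matrix has constant term $\tfrac{1}{2} \mathrm{tr}(A(0))$, which is positive by the trace hypothesis in \eqref{eq:can-mou-san}, giving $a_0 > 0$; the $x^r$-coefficient of the $J$-part is unaltered by the conjugation and equals $\mathfrak{b} \ne 0$ by \eqref{eq:Jx2}, giving $b_0 \ne 0$; and the degree bounds on $\tilde{a}$ and $\tilde{b}$ follow from truncating at $x^{q+1}$. The main obstacle is the bookkeeping in the inductive step: lower-order $T_j$ feed back nonlinearly into the degree-$(r+k)$ coefficient via $(T^{(k)})^{-1}$, but these contaminations are already determined at that stage, and the residual equation for $T_k$ always reduces to the single solvable homological equation $\mathfrak{b}[J, T_k] = -(\text{accumulated $W$-part})$.
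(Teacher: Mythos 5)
Your proof is correct and takes essentially the same route as the paper's: both determine $T_k$ inductively from a homological equation in which the new unknown enters only through $\mathfrak{b}[J,T_k]$, solvable because $\operatorname{ad}_J$ annihilates the centraliser $\R I\oplus\R J$ and is bijective on its complement (the paper phrases this via the equation $AT-TD-x^{q+1}T'=x^{q+1}E$ with $D=a(x)I+x^rN(x)$, $N$ having coefficients in $\R I\oplus\R J$, and writes down the explicit $T_k\in W$, which is precisely your unique solution of the homological equation). One small point to tighten: $a_0=\tfrac12\trace A(0)>0$ does not follow from the trace condition in \eqref{eq:can-mou-san} alone, since that condition only controls the coefficient of $x^l$; for $r\ge 1$ you should add that $A(0)=a(0)I$ together with the final-form requirement that $A(0)$ have a nonzero eigenvalue forces $l=0$ and hence $a(0)>0$ --- a verification the paper likewise leaves implicit.
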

	
	\begin{proof}
		Set again $A(x):= a(x)I + x^rM(x)$, and assume $A$ satisfies conditions \eqref{eq:can-mou-san} and \eqref{eq:Jx2}.  If a matrix $T$ as required exists, then there exists a real analytic $g_T$, defined on a neighbourhood of $0$ and depending on $T$, such that $h$ is a solution at 0 of our system \eqref{eq:system-gen} if and only if $T^{-1} h$ is a solution at 0 of the system
		\begin{equation*}
		x^{q+1} z' = T^{-1} \left( AT - x^{q+1} T'\right) z + x^{q+1} g_T(x,z).
		\end{equation*}
		Thus, it suffices to find $T$ and matrices $D,E \in \mathcal{M}_2(\R)[x]$ of degree at most $q$ such that
		\begin{equation}
		\label{eq:T-eq}
		AT - TD - x^{q+1} T' = x^{q+1} E
		\end{equation}
		and
		\begin{equation}
		\label{eq:D-form}
		D(x) = a(x)I + x^rN(x),
		\end{equation}
		where $N(x) = N_0 + xN_1 + \cdots + x^{q-r} N_{q-r}$, with each $N_j \in \mathcal{M}_2(\R)$ of the form $\begin{pmatrix} \mathfrak{a}_j & -\mathfrak{b}_j \\ \mathfrak{b}_j & \mathfrak{a}_j \end{pmatrix}$ and $\mathfrak{b}_0 \ne 0$.  To do so, we write $M(x) = M_0 + x M_1 + \cdots + x^{q-r}M_{q-r}$ with each $M_j \in \mathcal{M}_2(\R)$.  Plugging into \eqref{eq:T-eq} yields
		\begin{align*}
		&x^{q+1} E & = &x^r (MT - TN) - x^{q+1} T' \\
		&& = &x^r (M_0 - N_0) \\ &&&+ x^{r+1} (M_0 T_1 - T_1 N_0 + M_1 - N_1) \\ &&&\ \vdots \\ &&&+ x^q\left( \sum_{j=0}^{q-r-1} (M_j T_{q-r-j} - T_{q-r-j} N_j) + M_{q-r} - N_{q-r} \right) \\ &&&+ x^{q+1} P,
		\end{align*}
		where $P \in \mathcal{M}_2(\R)[x]$ is of degree at most $q$ and depends on $T$ and $N$.  This shows that we can take $N_0:= J$, which works because of our hypotheses.  Working by induction on $k = 0, \dots, q-r$, we therefore assume $k > 0$ and having found $T_1, \dots, T_{k-1}$ and $N_0, \dots, N_{k-1}$ with the required properties such that $$\sum_{j=0}^{l-1} (M_j T_{l-j} - T_{l-j} N_j) + M_{l} - N_{l} = 0, \quad\text{for } l=0, \dots, k-1;$$ we then need to find $T_k$ such that $$N_k := M_k + (M_0 T_{k} - T_{k} N_0) + \sum_{j=1}^{k-1} (M_j T_{k-j} - T_{k-j} N_j)$$ also has the required properties.  Since the matrix
		\begin{equation*}
		\begin{pmatrix} \alpha & \beta \\ \gamma & \delta \end{pmatrix} := M_k + \sum_{j=1}^{k-1} (M_j T_{q-r-j} - T_{q-r-j} N_j)
		\end{equation*}
		is already determined, direct computation shows that $$T_k:= \frac1{4 \mathfrak{b}} \begin{pmatrix} -\gamma-\beta & \alpha - \delta \\ \alpha - \delta & \gamma+\beta \end{pmatrix}$$ does the job.  Finally, with $T$ and $N$ determined in this way, both $P$ and $g_T$ are determined as well, and we take $E:= P$.  Then  $$D = T^{-1}AT + O\left(x^{q+1}\right),$$ so that tr$(D)=$ tr$(A)+O\left(x^{q+1}\right)$. Since $A$ satisfies \eqref{eq:can-mou-san}, it follows that the condition $a_l>0$ in the definition of interlaced final form \eqref{eq:int-final-form} is met.
	\end{proof}
	
	Thus, the Main Theorem is implied by the following particular case:
	\begin{theorem}\label{th:main-final-form}
		Assume that system \eqref{eq:system-gen} is in interlaced final form \eqref{eq:int-final-form}, and let $H$ be its unique divergent formal solution at 0 satisfying $H(0) = 0$. Then, for $h \in \II(H)$, the structure $\R_{\an,h}$ is model complete, \hbox{o-minimal} and
		polynomially bounded.
	\end{theorem}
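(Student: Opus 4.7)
The plan is to deduce Theorem \ref{th:main-final-form} from the criterion of Rolin, Sanz and Schäfke \cite{Rol-San-Sch}, which, as will be recalled in Section \ref{sat}, reduces the model completeness, o-minimality and polynomial boundedness of $\R_{\an,h}$ to verifying the so-called SAT property of the formal power series solution $H$. Since the interlaced final form \eqref{eq:int-final-form} guarantees the existence and uniqueness of such an $H$ with $H(0)=0$ (Remark \ref{int-final-form-rmk}), and since SAT depends only on $H$, establishing SAT yields the conclusion simultaneously for every $h\in\II(H)$.

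First, I would analyze the formal structure of the linear part $a(x)I+x^r b(x)J$ of \eqref{eq:int-final-form}. The hypotheses $a_0>0$, $b_0\neq 0$ and $J=\begin{pmatrix}0&-1\\1&0\end{pmatrix}$ determine the Newton polygon of the system and, after diagonalising $J$ over $\C$, split $H$ into a complex conjugate pair of scalar formal solutions. Standard multisummability theory, recalled in Sections \ref{multi} and \ref{multisummability}, then yields that $H$ is multisummable, identifies its levels from the exponents of $a(x)$ and $x^rb(x)$, and locates the singular (Stokes) directions. Using this setup I would compute the associated Stokes multipliers explicitly, as in Example \ref{explicit_stokes}, so as to exhibit the non-flat germs that arise as differences of consecutive sectorial sums of $H$.

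The main obstacle, and the central technical contribution of the paper, is to establish the independence of these Stokes components over the algebra of non-flat germs, as formulated in Proposition \ref{pro:multisum}. This is where the interlaced hypothesis is decisive: the non-real eigenvalues of $J(0)$ together with $a_0>0$ force the dominant exponential factors $e^{-\alpha/x^{l}}$ attached to the Stokes data to rotate around $0$ as one moves from sector to sector. This rotational arrangement precisely prevents any nontrivial algebraic relation between them modulo flat germs; by contrast, in the subanalytically separated case no such rotation occurs, which is consistent with the fact that the theorem is restricted to interlaced pencils. I expect this step to require carefully tracking the arguments of the eigenvalues of $A(x)$ together with the combinatorial structure of the Stokes directions dictated by the interlaced final form.

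Once independence is proved, the SAT property of $H$ follows by the general procedure of \cite{Rol-San-Sch} (verified in Section \ref{sat}), and a direct appeal to their main theorem yields the model completeness, o-minimality and polynomial boundedness of $\R_{\an,h}$ for every $h\in\II(H)$, completing the proof.
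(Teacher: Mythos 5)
Your high-level skeleton does match the paper: reduce Theorem \ref{th:main-final-form} to the SAT property of $H$ via the Rolin--Sanz--Sch\"afke criterion, then extract SAT from an explicit description of the Stokes phenomena of $H$. But the core of the argument is missing or misplaced in your plan. SAT is not a statement about $H$ alone: it demands analytic transcendence of the tuples $R_kH\circ P$ for \emph{every} tuple $P$ of distinct $q$-short positive polynomials and every $k$ (the paper reduces to $k=0$ via Remark \ref{0-sat_reduction}, which is exactly why the term $c(x)$ appears in \eqref{eq:int-final-form}). Your proposal never engages with the polynomials $P_j$ at all. Note also that $H$ itself is simply $q$-summable, with singular directions among the $q$-th roots of unity because $A(0)=a_0I$; genuine multisummability only enters through the compositions $H_i\circ P_j$ of different orders $\nu_j$, and Proposition \ref{pro:multisum} is the statement that summation commutes with such compositions --- it is not the independence statement you attribute to it. The independence is proved inside the Claim of Section \ref{sat}, and getting there requires the minimality-of-$\Lambda_F$ argument and Lemma \ref{lm:partial-f-non-zero} (to guarantee non-flat coefficients $D_{ij,\theta}$), plus the Subclaim on the principal part of $Q_a\circ P_{j_1}-Q_a\circ P_{j_2}$, where $q$-shortness and positivity of the $P_j$ are used to isolate a single dominant exponential $\exp(Q_a\circ P_{j_0})$ along a suitable ray. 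None of these steps appear in your outline, and without them one cannot rule out relations mixing several distinct $P_j$.

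Moreover, the mechanism you invoke for the independence is not the one that works. Since $r>0$ and $a_0>0$, the dominant exponential factors $\exp(Q_a\circ P_j)$ attached to the Stokes data are real and exponentially decaying; they do not rotate, and no ``sector to sector'' rotation argument occurs in the proof. The interlaced hypothesis enters only after dividing out the dominant real exponential: by \eqref{eq:expression-stokes}, the remaining factor is governed by the purely oscillatory functions $e_1=\exp(iQ_b)\,x^{ib_{q-r}}$ and $e_2=1/e_1$ (here $b_0\neq0$ is exactly the non-real-eigenvalue, i.e.\ interlacedness, condition), and a vanishing combination with coefficients admitting asymptotic expansions would force $(e_1\circ P_{j_0})^2$ to coincide with a quotient of such germs, which is impossible because it spirals. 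This single-direction oscillation argument, carried out along one ray $R_\phi$, is the actual independence proof; as stated, your claim that ``the rotational arrangement prevents any nontrivial algebraic relation modulo flat germs'' neither addresses coefficients that are non-flat analytic germs nor relations involving several polynomials, so it would not yield SAT and hence not the theorem.
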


	\section{Reduction to establishing SAT} \label{rss}
	
	To explain our variation of the approach in \cite{Rol-San-Sch}, we need to recall some definitions and facts.  First, recall that a tuple $F = (F_1, \dots, F_l) \in \Ps{R}{X}^l$ such that $F(0) = 0$ is \textbf{analytically transcendental} if, for every \textit{convergent} $G \in \Ps{R}{X,Z}$ such that $G(0) = 0$ and $Z = (Z_1, \dots, Z_l)$, the condition $G(X, F(X))=0$ implies $G=0$.
	
	For the remainder of this section, we work with system \eqref{eq:system-gen} and assume that it has a formal solution $H$ at $0$.  For $k \in \N$, we associate the point $p_k \in \R^n$ and the tuple $H^k$ to $H$ as in Remark \ref{it_tang_rmk}, and we set $$R_k H(X) = (R_k H_1(X), \dots, R_k H_n(X)):= H^k(X)-p_k.$$  Note that $R_k H(0) = 0$ for each $k$.
	
	\begin{definition}
		\label{def:SAT}
		Let $q$ be as in system \eqref{eq:system-gen}.
		\begin{enumerate}
			\item We call a polynomial $P \in\R[X]$ \textbf{positive} if
			$P(x)>0$ for all sufficiently small $x>0$, and we call $P$ \textbf{$q$-short} if $\deg P < (q+1) \ord P$.  
			\item The formal solution $H$ is \textbf{strongly analytically transcendental}, or \textbf{SAT} for short (pronounced ``sat''), if for any integers $k \ge 0$ and $l \ge 1$ and
			any $l$-tuple $P = (P_1, \dots, P_l)$ of distinct $q$-short positive polynomials, the tuple $$R_k H \circ P:= \left(R_k H_1 \circ P_1, \dots, R_k H_n \circ P_1, R_k H_1 \circ P_2, \dots, R_k H_n \circ P_k\right)$$ is analytically transcendental. 
		\end{enumerate}
	\end{definition}
	
	\begin{fact}[Lemma 4.1 and Theorem 2.2 of \cite{Rol-San-Sch}]
		Assume that system
		\eqref{eq:system-gen} has a SAT formal solution $H$ at 0. Then for every
		$h \in \II(H)$, the structure
		$\R_{\an,h}$ is model-complete, o-minimal and
		polynomially bounded.
	\end{fact}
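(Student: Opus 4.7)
The plan is to realize $h$ as an element of a quasi-analytic Denjoy--Carleman class $\mathcal{Q}$ of germs at $0$ and then invoke the general o-minimality theorem for structures generated by quasi-analytic classes, in the spirit of Rolin--Speissegger--Wilkie. The SAT hypothesis is precisely the injectivity input that makes $\mathcal{Q}$ quasi-analytic, while the $q$-short condition on the polynomials is what keeps the resulting structure polynomially bounded, since only polynomial-growth monomial substitutions are introduced into $\mathcal{Q}$.

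Concretely, I would build $\mathcal{Q}$ as the smallest collection of real-valued germs at $0^+$ containing the convergent power series in $x$, containing the components of $h$, and closed under the operations required by the o-minimality framework: partial derivatives, implicit function inversion, composition with $\R_{\an}$-definable germs, and, crucially, the two families of substitutions that mirror the two sets of parameters appearing in the SAT definition. The first is substitution by a tuple $(P_1(x), \dots, P_l(x))$ of distinct $q$-short positive polynomials, which is needed to handle functions produced by preparation-type arguments (where several independent small parameters appear simultaneously). The second is the blow-up substitutions $(x,y) \mapsto (x, xy+p)$ which, by Remark \ref{it_tang_rmk}, replace the germ produced by $H$ with those produced by $R_1 H$, $R_2 H$, and so on --- this is why SAT is formulated for all $R_k H$ rather than only $R_0 H$. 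The Taylor morphism $T \colon \mathcal{Q} \into \Ps{R}{X}$ sends each constructed germ to its formal counterpart, and the SAT hypothesis asserts exactly that no nontrivial convergent relation $G(X, (R_k H \circ P)(X)) = 0$ holds, which is quasi-analyticity of $\mathcal{Q}$ on the relevant subalgebras.

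Once $\mathcal{Q}$ is shown to be a quasi-analytic algebra closed under these operations, o-minimality, model completeness, and polynomial boundedness of $\R_{\an, h}$ follow from the standard machinery (Weierstrass preparation in quasi-analytic classes, cell decomposition, model completeness via existential closures of the generating system). Polynomial boundedness is automatic because only $q$-short polynomial substitutions are used in building $\mathcal{Q}$, so no exponential or logarithmic behaviour enters.

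The main obstacle is verifying that $\mathcal{Q}$ remains genuinely closed under the nonlinear operations --- implicit function inversion and blow-ups --- without breaking the SAT-based injectivity of $T$. Keeping the closure small enough for SAT to continue to apply is the delicate point, and this is precisely why the definition of SAT includes both iteration (the full sequence $R_k H$) and simultaneous polynomial tuples (of arbitrary length $l$): these are exactly the operations that arise during the closure process, and they must be accommodated at the level of the SAT property rather than shown to preserve quasi-analyticity after the fact.
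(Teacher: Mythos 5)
The paper offers no proof of this statement: it is imported as a Fact, verbatim, from \cite{Rol-San-Sch} (Lemma 4.1 and Theorem 2.2), and the contribution of the present paper lies entirely in verifying the SAT hypothesis for systems in interlaced final form. Your outline is therefore not an alternative route but a sketch of the strategy of the cited reference itself: build quasi-analytic algebras generated by $h$ in the style of Rolin--Speissegger--Wilkie, with SAT providing injectivity of the Taylor map, and then run the o-minimality machinery.

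Viewed as a proof, however, the sketch has genuine gaps exactly where the cited results do their work. A class $\mathcal{Q}$ of one-variable germs at $0^+$ cannot by itself feed the o-minimality machinery: since $\R_{\an,h}$ is generated by the graph of $h$, one needs a whole system of algebras of functions of several variables, closed under Weierstrass-type preparation, blow-ups, implicit functions and monomial division, and quasi-analytic with respect to the several-variable Taylor map; establishing this closure without destroying quasi-analyticity is precisely the content of \cite[Theorem 2.2]{Rol-San-Sch}, which you invoke as ``standard machinery.'' The step you defer as ``the delicate point'' --- that the only one-variable substitutions produced during this normalization process are of the form $R_kH\circ P$ with $P$ a tuple of distinct $q$-short positive polynomials, so that the SAT property is exactly the transcendence statement needed --- is the main technical content of \cite[Lemma 4.1]{Rol-San-Sch}, not something that can be assumed at the level of the definition of SAT. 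Finally, the assertion that polynomial boundedness is ``automatic because only $q$-short polynomial substitutions are used'' is not an argument: in this framework polynomial boundedness comes from the fact that every germ in the generated class admits a power-series asymptotic expansion, so that definable one-variable germs are asymptotic to real power functions. In short, you have correctly identified the architecture of the proof in the reference the paper cites, but the load-bearing steps are asserted rather than proved.
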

	
	Thus, to prove Theorem \ref{th:main-final-form} (and hence the Main Theorem), it suffices to establish the following:
	
	\begin{theorem}\label{th:main-SAT}
		Assume that system \eqref{eq:system-gen} is in interlaced final form \eqref{eq:int-final-form}, and let $H$ be its unique divergent formal solution at 0 satisfying $H(0) = 0$. Then $H$ is SAT.
	\end{theorem}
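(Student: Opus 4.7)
The plan is to deduce SAT for $H$ by combining the multisummability of $H$ with an explicit independence property of its Stokes components; the former is fairly classical, while the latter is the new input specific to the interlaced final form. The abstract passage from independent Stokes components to SAT is already contained in \cite{Rol-San-Sch}, so the task here is precisely to verify that independence hypothesis in our setting.

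First, I would establish multisummability of $H$ together with explicit Stokes data. Since the linear part $A(x) = a(x)I + x^r b(x) J$ of \eqref{eq:int-final-form} has complex-conjugate eigenvalues $a(x) \pm i b(x) x^r$, standard Braaksma-style results for nonlinear systems of Poincar\'{e} rank $q$ imply that $H$ is multisummable, and its Stokes differences along the finitely many singular directions are, to leading order, nonzero analytic germs multiplied by exponentials of the form
\[
\exp\!\Bigl(-\int_0^x a(s)\,s^{-q-1}\,ds\Bigr)\cdot \exp\!\Bigl(\pm i\int_0^x b(s)\,s^{r-q-1}\,ds\Bigr).
\]
The interlaced hypotheses $a_0 > 0$ and $b_0 \ne 0$ guarantee that these factors are genuinely flat and genuinely oscillating. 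This would recover Proposition \ref{pro:multisum} and Example \ref{explicit_stokes}.

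Next, I would run the independence argument. Fix $k \ge 0$ and distinct $q$-short positive polynomials $P_1, \ldots, P_l$, and suppose for contradiction that a nonzero convergent $G$ with $G(0) = 0$ satisfies $G(X, R_k H \circ P) = 0$. Analytically continuing this relation along a small loop around the origin and taking Stokes differences across appropriate singular directions, each component $R_k H \circ P_j$ contributes an exponentially small germ driven by $\exp\bigl(-\int_0^{P_j(x)} a(s) s^{-q-1}\,ds\bigr)\cdot \exp\bigl(\pm i \int_0^{P_j(x)} b(s) s^{r-q-1}\,ds\bigr)$. The $q$-short condition $\deg P_j < (q+1) \ord P_j$ ensures that these composed exponentials remain in the appropriate Gevrey class and, for distinct $P_j$, produce pairwise distinguishable dominant asymptotics (in modulus when the orders or leading coefficients differ, in phase otherwise). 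A descent on the number of components then forces all coefficients of $G$ as a series in its $Z$-variables to vanish on $R_k H \circ P$, and hence $G \equiv 0$.

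The main obstacle will be this final separation step when several $P_j$ share leading monomials: in that case the real exponential factors only differ by polynomial corrections, and the separation must rely on the oscillating factors. The phases $\int_0^{P_j(x)} b(s) s^{r-q-1}\,ds$ for distinct $P_j$ have different derivatives, so by choosing a Stokes direction carefully and applying a Watson-type lemma to the oscillating remainders, one can still isolate each contribution. Once the independence of Stokes components over non-flat convergent germs is established in this way, SAT follows from the mechanism of \cite[Section 4]{Rol-San-Sch}, to be carried out in Section \ref{sat}.
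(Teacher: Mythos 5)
Your overall route---multisummability of $H$ with explicit Stokes data, an independence argument driven by the $q$-short polynomials, and then the Rolin--Sanz--Sch\"afke mechanism to pass from independence of Stokes components to SAT---is indeed the paper's strategy, but the two places where you locate the difficulty are not where the real work is, and the step that actually carries the interlaced hypothesis is missing. First, the separation of the contributions of distinct $P_j$: you propose to separate ``in modulus when the orders or leading coefficients differ, in phase otherwise'', invoking a Watson-type lemma for the oscillating remainders. No phase argument is needed here, and it is far from clear that one could be made to work, since the oscillatory exponentials come multiplied by coefficients $D_{ij,\theta}\in\AA$ that are known only through their asymptotic expansions. The point of $q$-shortness is precisely that for distinct $q$-short positive polynomials $P_{j_1},P_{j_2}$ the meromorphic germ $Q_a\circ P_{j_1}-Q_a\circ P_{j_2}$ \emph{always} has nonzero principal part at $0$: even when the leading monomials coincide, writing $P_{j_2}=P_{j_1}+P$ with $\nu<\ord P<\nu(q+1)$ produces a term of negative order $\ord P-\nu-\nu q<0$. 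Hence on a suitably chosen ray a single real exponential $\exp(Q_a\circ P_{j_0})$ dominates all others and the sum over $j$ collapses to one term; the modulus always separates, and your phase-separation step is both unnecessary and unsubstantiated.

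Second, and more seriously, after this reduction one still faces the single vector-valued term $D_{1j_0,\theta}\cdot\Delta_{\nu_{j_0}\theta}H_1\circ P_{j_0}+D_{2j_0,\theta}\cdot\Delta_{\nu_{j_0}\theta}H_2\circ P_{j_0}$, and nothing in your proposal rules out cancellation between the two components of $H$ composed with the \emph{same} polynomial. This is exactly where the interlaced structure enters the paper's proof: diagonalizing the linear system satisfied by $\Delta_\theta H$ gives $\Delta_\theta H=(\exp\circ Q_a)\cdot x^{a_q}\cdot C_\theta\cdot\mu_\theta\cdot E$ with $C_\theta(0)=\begin{pmatrix}1&1\\-i&i\end{pmatrix}$ and $E=\begin{pmatrix}e_1\\1/e_1\end{pmatrix}$ purely oscillatory because $b_0\neq 0$; combined with the fact that the leading asymptotic coefficients $\alpha_1,\alpha_2$ of $D_{1j_0,\theta},D_{2j_0,\theta}$ are \emph{real} and not both zero---so that $\alpha_1+i\alpha_2$ and $\alpha_1-i\alpha_2$ are both nonzero---vanishing of the term would force $(e_1\circ P_{j_0})^2=-\delta_2/\delta_1$, which is impossible. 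Obtaining those nonzero leading coefficients is what the minimality of $|\Lambda_F|$ and the derivative trick of Lemma \ref{lm:partial-f-non-zero} are for; your ``descent on the number of components'' gestures at that minimality but never uses it for this purpose, and without the two-component cancellation argument the independence of the Stokes components---the only place where $b_0\neq 0$ and the realness of the data are used---is not established. A minor further point: the paper reduces SAT to 0-SAT via the extra term $c(x)$ in the interlaced final form, so that the class is stable under $H\mapsto R_1H$; treating general $k$ directly, as you do, requires a separate justification that $R_kH$ has Stokes data of the same shape.
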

	
	Let us point out that, in the situation of Theorem \ref{th:main-SAT} with $r=0$ in \eqref{eq:int-final-form}, system \eqref{eq:system-gen} also satisfies
	the hypotheses in \cite[Theorem 2.4']{Rol-San-Sch}, thus implying Theorem \ref{th:main-SAT} for this case.  In general, however, we allow the linear part of \eqref{eq:int-final-form} to have two real eigenvalues (whenever $r>0$), a case to which
	\cite[Theorem 2.4']{Rol-San-Sch} does not apply.  As our proof would not be different for the case $r=0$, we shall focus on the case $r>0$, which allows us to somewhat lighten notations.  
	
	The reason for the term $c(x)$ in the definition of ``interlaced final form'' is that it suffices to establish the following weakening of SAT:
	
	\begin{definition}
		\label{0-sat}
		Let $q$ be as in system (1).  The formal solution $H$ is \textbf{0-SAT} if for any integer $l \ge 1$ and
		any $l$-tuple $P = (P_1, \dots, P_l)$ of distinct $q$-short positive polynomials, the tuple $R_0 H \circ P$ is analytically transcendental.
	\end{definition}
	
	\begin{remark}
		\label{0-sat_reduction}
		It suffices to prove Theorem \ref{th:main-SAT} with ``0-SAT'' in place of ``SAT''.  To see this, assume that Theorem \ref{th:main-SAT} holds with ``0-SAT'' in place of ``SAT'', and assume that system \eqref{eq:system-gen} is in interlaced final form \eqref{eq:int-final-form}, and let $H$ be its unique formal solution at 0 satisfying $H(0) = 0$.  Then 
		\begin{equation}
		\label{rmk 8}
		X^{q+1} H' = T_0A \cdot H + X^{q+1} \cdot T_0 g(X,H) + T_0c,
		\end{equation}
		where $A(x):= a(x)I + x^r b(x)J$.  Since $R_1H(X) = H^1(X) - p = H(X)/X - p$, where $p:= H^1(0)$, it follows that
		\begin{align*}
		X^{q+1} (R_1&H)' = (T_0A-X^qI) H^1 + X^q \cdot T_0g(X,H) + T_0c/X \\
		&= (T_0A-X^qI) R_1H + X^{q+1} T_0h(X,R_1H) + T_0d,
		\end{align*}
		where $h(x,y):= \left(T_{(0,p)}G\right)(x,y)$ with $G(x,y):= (g(x,xy)-g(0,0))/x$ and $$d(x):= c(x)/x + x^q g(0,0) + (T_0A(x)-x^qI) p.$$  Note that $\deg d \le q$; dividing \eqref{rmk 8} by $X$ and setting $X=0$, we get that $d(0) = 0$.  Thus, $R_1H$ is the unique formal solution at 0, with $R_1H(0) = 0$, of another system \eqref{eq:system-gen} in interlaced final form \eqref{eq:int-final-form}.  Since $R_{k+1}H = R_1(R_kH)$ for $k \in \N$, we obtain, by iterating this procedure and  applying the hypothesis, that $H$ is SAT.
	\end{remark}

	\section{Summability}  \label{multi}
	
	We recall, in this and the next section, the basics of multisummability as described by Malgrange and Ramis \cite{Mal-Ram}, with notations adapted to our situation.  Thus, we set $\C^*:= \C \setminus \{0\}$, $\R^*:= \R \setminus \{0\}$, $\R_+:= [0,+\infty)$, $\R^*_+:= \R^* \cap \R_+$ and let $\S^1$ be the unit circle in $\R^2$.  We identify $\S^1$ with the interval $[0,2\pi)$ via the standard argument map, and we equip $\S^1$ in this way with addition $\oplus$ and subtraction $\ominus$ obtained from the corresponding operations modulo $2\pi$ on $[0,2\pi)$.  We also identify $\C^*$ with $\R^*_+\times \S^1$
	via the usual covering map
	$\rho:(r,\theta)\in\mathbb{R}_+\times\mathbb{S}^1\mapsto r
	e^{i\theta}$. 
	
	Thus, we associate to any subset $X$ of
	$\mathbb{S}^1$ the set $\mathcal{V}_X$ of all open $U \subseteq \C^*$ for which there exist an open $W \subseteq X$ and an $\epsilon > 0$ such that $\rho((0,\epsilon) \times W) \subseteq U$.
	For any $X \subseteq \S^1$, we let $\OO(X)$ be the algebra of all germs at 0 of analytic functions $f:U \into \C$ with $U \in \VV_X$; then $\OO:= \{\OO(U):\ U \subseteq \S^1$ open$\}$ is a sheaf on $\S^1$.  
	%Moreover, for arbitrary $X \subseteq \S^1$, the algebra $\OO(X)$ is the direct limit of all $\OO(U)$ with $U \subseteq \S^1$ open and $X \subseteq U$; in particular, for $\theta \in \S^1$, the stalk $\OO_\theta$ is equal to $\OO(\{\theta\})$.  
	
	The reason for introducing sheaf terminology is that it provides a convenient setting in which to define multisummability; we refer the reader to Hartshorne \cite[Section II.1]{Hartshorne:1977hr} for details on sheaves.  Thus, we let $\AA$ be the subsheaf of $\OO$ whose stalk $\AA_\theta$, for $\theta \in \S^1$, consists of all $f \in \OO_\theta$ that have an asymptotic expansion $T_\theta f(X) = \sum a_n X^n \in \Ps{C}{X}$ at $0$, that is, there exist a representative $f:V \into \C$, with $V \in \VV_{\{\theta\}}$, and constants $c_n \in \R$ depending on $V$, for $n \in \N$, such that 
	\begin{equation}
	\label{asymptotic_estimate}
	\left| f(z) - \sum_{n=0}^{m-1} a_n z^n \right| \le c_m|z|^m, \quad\text{for } z \in V \text{ and } m \in \N.
	\end{equation}  
	If $\CC$ is the sheaf on $\S^1$ whose section, for open $U \subseteq \S^1$, consists of all locally constant maps $F:U \into \Ps{C}{X}$, we call \textbf{Taylor map} the morphism $T:\AA \into \CC$ of sheaves induced by the maps $T_\theta$.
	
	\begin{rmk}
		If $U \subseteq \S^1$ is connected, then $T\rest{\AA(U)}$ takes values in $\Ps{C}{X}$.  It follows from basic complex analysis that if $f \in \AA(\S^1)$, then $Tf$ converges.
	\end{rmk}
	
	Next, we define the subsheaf $\mathcal{A}^{0}$ of \textbf{flat functions} as the kernel of $T$ and, for $k>0$, we let $\AA^{k}$ be the subsheaf of $\AA^0$ whose stalk $\AA^k_\theta$, for $\theta \in \S^1$, consists of all $f \in \AA_\theta$ that are \textbf{exponentially flat of order at least $k$}, that is, there exist a representative $f:V \into \C$, with $V \in \VV_{\{\theta\}}$,  and constants $A,b>0$ depending on $V$ such that
	\begin{equation*}
	|f(z)|\le A e^{-b/|z|^k} \quad\text{for } z \in V.
	\end{equation*}
	
	\begin{fact}[\textbf{Watson's Lemma}, statement before D\'efinition 1.5 in \cite{Mal-Ram}]
		\label{watson}
		Let\footnote{If one replaces $\S^1$ by its universal covering space $\R$, all definitions and facts stated in this section are easily adapted to all $k>0$.  Since we only consider integer $k>0$ in this paper, the present setting suffices for our purposes.} $k > 1/2$ and $I \subseteq \S^1$ be a closed interval of length $|I| \ge \pi/k$.  Then $\AA^k(I) = \{0\}$.
	\end{fact}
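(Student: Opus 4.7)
The plan is to prove Fact \ref{watson} via Laplace transform and Liouville's theorem. First I would extract a \emph{uniform} exponential bound. For each $\theta \in I$, the definition of $\AA^k_\theta$ provides a neighbourhood $V_\theta \in \VV_{\{\theta\}}$ and constants $A_\theta, b_\theta > 0$ with $|f(z)| \le A_\theta \exp(-b_\theta/|z|^k)$ on $V_\theta$. Compactness of the closed interval $I$ gives a finite subcover, yielding uniform constants $A, b > 0$ such that $|f(z)| \le A \exp(-b/|z|^k)$ on an open sectorial neighbourhood $W$ of $\{0\} \times I$. Because $I$ is closed, the angular opening of $W$ is \emph{strictly} greater than $|I| \ge \pi/k$; this strict inequality is crucial. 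After rotation, and shrinking $W$ if necessary, I may assume $W$ contains $\{z \in \C^* :\ |\arg z| < \alpha/2,\ |z| < R\}$ for some $\alpha$ with $\pi/k < \alpha \le 2\pi/k$.

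Next, the substitution $Z := z^{-k}$ turns this into a function $F(Z) := f(Z^{-1/k})$, holomorphic on a sector $\Sigma = \{Z : |\arg Z| < k\alpha/2\}$ at $\infty$ of opening $k\alpha > \pi$, with $|F(Z)| \le A\exp(-b|Z|)$. For each $\phi$ with $|\phi| < k\alpha/2$, form
\[
g_\phi(s) := \int_0^{\infty} F(te^{i\phi})\,\exp(-ste^{i\phi})\, e^{i\phi}\, dt,
\]
which converges absolutely on the half-plane $\{s \in \C :\ b + \operatorname{Re}(se^{i\phi}) > 0\}$. Since $\Sigma$ has opening strictly greater than $\pi$, these half-planes together cover all of $\C$ as $\phi$ varies, and Cauchy's theorem (using the exponential decay of $F$) shows that the various $g_\phi$ coincide on common domains. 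Hence they patch to an entire function $g$ on $\C$. Optimising $\phi$ for each $s$ gives the uniform bound $|g(s)| \le A/(b + c_0|s|)$ with some $c_0 > 0$, so $|g(s)| \to 0$ as $|s| \to \infty$; Liouville's theorem therefore forces $g \equiv 0$, and uniqueness of the Laplace transform then yields $F \equiv 0$ on $(0,\infty)$, whence $f \equiv 0$ by analytic continuation.

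The delicate point is the first step: the whole argument rests on replacing the weak inequality $|I| \ge \pi/k$ by the \emph{strict} inequality $\alpha > \pi/k$ (equivalently, $k\alpha > \pi$) for the opening of the sector on which the uniform exponential bound holds. This is precisely the gain obtained from the closedness (and hence compactness) of $I$; without it, the contour-deformation in the second step would fail to produce an entire function on all of $\C$, because the sum of the opening of $\Sigma$ with the opening $\pi$ of each half-plane of convergence would be at most $2\pi$. The hypothesis $k > 1/2$ enters only to ensure $\pi/k < 2\pi$, so that a closed interval of length $\ge \pi/k$ is a proper subset of $\S^1$ and the enlargement to an open sector remains inside $\S^1$.
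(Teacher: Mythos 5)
The paper itself offers no proof of this Fact: it is quoted from Malgrange--Ramis as the classical Watson lemma, so your argument has to stand on its own. Your first step is sound: a section of $\AA^k$ over the closed interval $I$ is represented on an open neighbourhood of $\{0\}\times I$, and compactness (tube lemma) yields uniform constants $A,b>0$ on a sector whose opening $\alpha$ is \emph{strictly} larger than $|I|\ge\pi/k$; your remark on the role of $k>1/2$ matches the paper's footnote. The gap is in the second step. Since $f$ is only a germ at $0$, defined for $0<|z|<R$, the function $F(Z)=f(Z^{-1/k})$ is holomorphic only on the truncated sector $\{|Z|>R^{-k},\ |\arg Z|<k\alpha/2\}$, so the integrals $g_\phi(s)=\int_0^\infty F(te^{i\phi})e^{-ste^{i\phi}}e^{i\phi}\,dt$ are not defined: the integrand makes no sense for small $t$. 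Nor is this a removable bit of bookkeeping in the form you set it up: if you truncate the rays at a radius $\rho>R^{-k}$, the Cauchy deformation between two directions produces an extra arc integral at radius $\rho$, an entire function of $s$ which the available estimates bound only by $Ce^{\rho|s|}$; the glued function is then merely entire of exponential type, not bounded and decaying, and the Liouville step collapses.

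The strategy can be saved, but it needs one more idea. After rotating so the sector is bisected by the positive real axis and writing $k\alpha=\pi+2\delta$, the truncated sector contains a full translated sector $c+\{W:\ |\arg W|\le\pi/2+\delta'\}$ for any $0<\delta'<\delta$ and $c>R^{-k}/\cos\delta'$; applying your construction to $\tilde F(W):=F(c+W)$, which satisfies $|\tilde F(W)|\le Ae^{bc}e^{-b|W|}$ on an honest unbounded sector of opening $\pi+2\delta'>\pi$ containing its vertex in the domain of holomorphy, the gluing, the bound $|g(s)|\le Ae^{bc}/(b+c_0|s|)$, Liouville and Laplace uniqueness all go through, giving $\tilde F\equiv 0$ and hence $f=0$ by the identity theorem. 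Alternatively --- and this is the classical argument behind the Malgrange--Ramis citation --- one avoids transforms altogether: choose $\kappa$ with $\pi/\alpha<\kappa<k$, apply the maximum principle to $f(z)\exp(cz^{-\kappa})$ on the bounded sector (the extra factor has modulus $\le1$ on the two edges because $\kappa\alpha/2>\pi/2$, and modulus $e^{cr_0^{-\kappa}}$ on the outer arc $|z|=r_0$), and let $c\to+\infty$ to conclude $f=0$ on the bisecting ray, hence everywhere. In either repair, the point you correctly isolated --- the strictly larger opening coming from the closedness of $I$ --- is exactly where the hypotheses enter.
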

	
	%\begin{rmk}
	%It follows from Watson's Lemma that the natural projection $\AA(I) \into \AA(I)/\AA^k(I)$ is bijective, for all closed intervals $I \subseteq \S^1$ of length at least $\pi/k$.  However, this is false if $I$ has length less than $\pi/k$; it follows that the natural projection $\AA(I) \into \left(\AA/\AA^k\right)(I)$ is not surjective, even if $I$ has length at least $\pi/k$.
	%\end{rmk}

	\subsection*{Gevrey asymptotics}
	Let $s \ge 0$.  We let $\Ps{C}{X}_s$ be the ring of all \textit{Gevrey series} of order $s$, that is, all $F(X) = \sum_{n=0}^\infty a_n X^n \in \Ps{C}{X}$ such that the series $\sum_{n=0}^\infty \frac{a_n}{\Gamma(ns)} X^n$ converges, where $\Gamma$ denotes the usual Gamma function.  We also let $\AA_s$ be the subsheaf of $\AA$ whose stalk $\AA_{s,\theta}$, for $\theta \in \S^1$, consists of all $f \in \AA_\theta$ for which there exist a representative $f:V \into \C$, with $V \in \VV_{\{\theta\}}$, and a constant $c>0$ depending on $V$ such that \eqref{asymptotic_estimate} holds with $c_n = c^n\Gamma(ns)$.
	Note that, for connected $U \subseteq \S^1$, we have $T(\AA_s(U)) \subseteq \Ps{C}{X}_s$.  
	
	\begin{fact}[1.3 and 1.4 of \cite{Mal-Ram}]
		\label{simple_summability}
		Let $k > 1/2$ and $I \subseteq \S^1$ be an interval.
		\begin{enumerate}
			\item $\AA_{1/k}(I) \cap \AA^0(I) = \AA^k(I)$.
			\item If $I$ is closed and of length less than $\pi/k$, then $T\rest{\AA_{1/k}(I)}$ is surjective onto $\Ps{C}{X}_{1/k}$.
			\item \textbf{Quasi-analyticity}: if $I$ is closed and of length at least $\pi/k$, then $T\rest{\AA_{1/k}(I)}$ is injective.
		\end{enumerate}
	\end{fact}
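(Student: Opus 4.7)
The plan is to prove the three items in the order (1), (3), (2). Item (3) follows easily from item (1) combined with Watson's Lemma (Fact \ref{watson}), while item (2), a Gevrey refinement of the classical Borel--Ritt theorem, is where the main work lies.

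For item (1), a function $f \in \AA_{1/k}(I) \cap \AA^0(I)$ is flat, so its Gevrey asymptotic bounds specialize to $|f(z)| \le c^n \Gamma(n/k) |z|^n$ for every $n \ge 0$, on a representative neighbourhood of $I$. The idea is to optimize this family of estimates over $n$. For fixed $|z| = r$, Stirling's formula $\Gamma(n/k) \sim (n/(ek))^{n/k}\sqrt{2\pi n/k}$ shows that the logarithm of the $n$th bound behaves like $n \log(cr) + (n/k)(\log(n/k)-1)$, which has a unique interior minimum near $n_\ast = k(cr)^{-k}$ of value comparable to $\exp(-(cr)^{-k})$. This yields $|f(z)| \le A \exp(-b/|z|^k)$ for suitable constants $A, b > 0$, placing $f$ in $\AA^k(I)$. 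The reverse inclusion is the same computation run backwards: starting from $|f(z)| \le A \exp(-b/|z|^k)$ and any $c$ with $c^{-k} < b$, the minimum over $n$ of $c^n \Gamma(n/k) r^n$ uniformly dominates $A \exp(-b/r^k)$ for small $r$, so each individual $n$-bound holds and $f \in \AA_{1/k}(I)$.

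Item (3) is then immediate: if $|I| \ge \pi/k$ and $f \in \AA_{1/k}(I)$ satisfies $Tf = 0$, then $f \in \AA_{1/k}(I) \cap \AA^0(I) = \AA^k(I)$ by item (1), and Fact \ref{watson} forces $f = 0$.

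Item (2) is the main obstacle. Given $F(X) = \sum a_n X^n$ with $|a_n| \le M c^n \Gamma(n/k)$, the goal is to build a holomorphic realization on a sector enclosing $I$ with the exact Gevrey remainder control. I would use a truncation-with-cutoff construction. Pick a smooth cutoff $\chi:\R_+ \into [0,1]$ with $\chi \equiv 1$ near $0$ and $\chi \equiv 0$ outside $[0,1]$, and set
\begin{equation*}
f(z) := \sum_{n=0}^\infty a_n z^n \, \chi(|z|/\rho_n),
\end{equation*}
with $\rho_n$ of order $n^{-1/k}$. At each $z \ne 0$ in a narrow sector around $I$ only finitely many terms contribute, and the Gevrey bound on $|a_n|$ together with the choice of $\rho_n$ gives remainder estimates of the required form $C^N \Gamma(N/k) |z|^N$. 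The catch is that this $f$ is only smooth, not holomorphic, so one must correct it by a $\overline{\partial}$-argument, effectively replacing $\chi(|z|/\rho_n)$ by a holomorphic cutoff built via the Cauchy--Green formula from $\chi$; the opening restriction $|I| < \pi/k$ is exactly what permits this correction to stay of Gevrey-$1/k$ type and, at the same time, what prevents a collision with the quasi-analyticity of item (3). An alternative, cleaner when $F$ is already $k$-summable on $I$, is the $k$-Borel--Laplace route: set $B(t) := \sum a_n/\Gamma(1+n/k) \cdot t^n$, which converges in a disc by the Gevrey hypothesis, and recover $f$ as a $k$-Laplace integral along each direction in $I$; this however requires an a priori analytic continuation of $B$ along those directions that the cutoff construction avoids, which is why I would take the cutoff approach as my default.
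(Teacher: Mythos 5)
The paper offers no proof of this statement at all: it is imported verbatim from Malgrange--Ramis (their 1.3 and 1.4), so your attempt can only be measured against the standard arguments. Your items (1) and (3) are correct and are exactly those standard arguments: the optimization over $n$ of the bounds $c^n\Gamma(n/k)|z|^n$ via Stirling in (1), and (3) as an immediate consequence of (1) together with Watson's Lemma (Fact \ref{watson}).

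Item (2) is where there is a genuine gap. First, the hypothesis $|I|<\pi/k$ never actually enters your construction: the smooth sum $\sum_n a_n z^n\,\chi(|z|/\rho_n)$ satisfies the Gevrey remainder estimates on arbitrarily wide sectors, and the $\bar\partial$-datum it produces can likewise be made exponentially flat of order $k$ there; yet surjectivity of $T\rest{\AA_{1/k}(I)}$ genuinely fails once $|I|\ge\pi/k$ (by (3) the map is then injective, and its image is precisely the set of series $k$-summable on $I$, which excludes any Gevrey series with a singular direction interior to $I$). So any correct proof must use the opening bound at a specific step, and your sketch does not identify that step; asserting that the bound is ``exactly what permits the $\bar\partial$-correction to stay of Gevrey-$1/k$ type'' is a claim, not an argument, and the correction itself (solving $\bar\partial u=g$ with flat data while keeping $u$ flat up to the corner at $0$ and the edges of the sector) is exactly the hard part you leave out. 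Second, your reason for discarding the Borel--Laplace route is mistaken: the classical proof of Borel--Ritt--Gevrey uses the \emph{truncated} $k$-Laplace integral
\begin{equation*}
f(z)\ :=\ \frac{1}{z^{k}}\int_0^{t_0} B(t)\,e^{-(t/z)^k}\,d(t^k),\qquad B(t):=\sum_{n}\frac{a_n}{\Gamma(1+n/k)}\,t^n,
\end{equation*}
with $t_0$ fixed \emph{inside} the disc of convergence of $B$, so no analytic continuation whatsoever is needed. The kernel $e^{-(t/z)^k}$ decays precisely when $|\arg z-\arg t_0|<\pi/(2k)$, which is where the restriction $|I|<\pi/k$ enters transparently, and a routine estimate of $f(z)-\sum_{n<N}a_nz^n$ gives the required bounds $C^N\Gamma(1+N/k)|z|^N$ on the corresponding closed sector. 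This is the proof you should give (or cite) for (2), in place of the cutoff-plus-$\bar\partial$ sketch.
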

	
	One of the key concepts needed is that of \textit{quotient sheaf}.  In our situation, we have the following: if $\BB$ is a subsheaf of $\AA$ and $I$ is a subinterval of $\S^1$, then every element of $(\AA/\BB)(I)$ is \textbf{represented} by a (finite if $I$ is closed, possibly infinite if $I$ is not closed) tuple of elements $f_i \in \AA(U_i)$, such that each $U_i$ is an open interval, $I \subseteq \bigcup_i U_i$ and, for all $i,j$, we have $(f_i-f_j)\mid_{U_i \cap U_j} \in \BB(U_i \cap U_j)$.  
	%Two tuples $(f_i \in \AA(U_i))_{i \in I}$ and $(g_j \in \AA(V_j))_{j \in J}$ represent the same element of $(\AA/\BB)(I)$ if and only if $(f_i-g_j)\rest{U_i \cap V_j}  \in \BB(U_i \cap V_j)$ for all $i,j$.  In particular, every $f \in \AA(I)$ represents a unique element of $(\AA/\BB)(I)$, and we refer to the corresponding morphism $\AA \into \AA/\BB$ as the \textbf{natural projection}.
	
	Since $\AA^0$ is the kernel of $T$ and $\AA^k$ is a subsheaf of $\AA^0$, for $k\ge0$, the Taylor map induces a morphism $T_k:\AA/\AA^k \into \CC$ of sheaves; we usually omit the subscript $k$.  Moreover, we have
	
	\begin{corollary}
		\label{ramis-sibuya}
		The map $T:\left(\AA/\AA^k\right)\left(\S^1\right) \into \Ps{C}{X}_{1/k}$ is an isomorphism.
	\end{corollary}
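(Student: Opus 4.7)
The plan is to establish Corollary \ref{ramis-sibuya} (the Ramis--Sibuya theorem) in three parts: well-definedness of $T$, surjectivity of $T$ onto $\Ps{C}{X}_{1/k}$, and injectivity. The map $T:(\AA/\AA^k)(\S^1) \to \Ps{C}{X}$ is well-defined at the level of $\Ps{C}{X}$: if $\bar f = [(f_i, U_i)]$, then $Tf_i - Tf_j = T(f_i - f_j) = 0$ on overlaps because $\AA^k \subseteq \AA^0 = \ker T$, and since $\S^1$ is connected the locally constant map $\theta \mapsto Tf_i$ (for $\theta \in U_i$) is constant, yielding $T\bar f \in \Ps{C}{X}$.

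For surjectivity onto $\Ps{C}{X}_{1/k}$, given $F \in \Ps{C}{X}_{1/k}$ I would cover $\S^1$ by finitely many open arcs $U_1, \dots, U_N$ whose closures $\bar U_j$ each have length less than $\pi/k$. Fact \ref{simple_summability}(2) applied to each $\bar U_j$ produces $f_j \in \AA_{1/k}(\bar U_j)$ with $Tf_j = F$. On overlaps, $f_j - f_l \in \AA_{1/k}(U_j \cap U_l)$ has vanishing Taylor series, so Fact \ref{simple_summability}(1) places it in $\AA^k(U_j \cap U_l)$. The data $(f_j, U_j)$ therefore defines $\bar f \in (\AA/\AA^k)(\S^1)$ with $T\bar f = F$.

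The technical heart of the proof is the Ramis--Sibuya upgrade: any representative $(f_i, U_i)$ of a section of $\AA/\AA^k$ over $\S^1$ admits, after refinement, a substitution by Gevrey-$1/k$ representatives. I would prove this by a Cauchy-integral estimate. After refining the cover to a short cyclic chain of overlapping arcs, for $z$ in a sector inside $U_i$ one writes
$$f_i^{(n)}(z) = \frac{n!}{2\pi i}\oint_\gamma \frac{\tilde f(w)}{(w-z)^{n+1}}\,dw,$$
where $\gamma$ is a closed contour around $z$ and $\tilde f(w)$ is defined piecewise to equal $f_j(w)$ on the portion of $\gamma$ whose argument lies in $U_j$. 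Across each radial transition the integrand picks up a defect $(f_j - f_{j+1})/(w-z)^{n+1}$ whose numerator obeys $|(f_j - f_{j+1})(w)| \le A\exp(-b/|w|^k)$ by definition of $\AA^k$. Splitting the radial contour at scale $|w| \sim n^{-1/k}$ and optimizing yields the Gevrey-$1/k$ bound $|f_i^{(n)}(z)| \le c^{n+1}\Gamma(n/k)$ on a slightly smaller sector, so $f_i \in \AA_{1/k}(U_i)$ after refinement.

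With the upgrade in hand, the image of $T$ lies in $\Ps{C}{X}_{1/k}$, since Gevrey-$1/k$ representatives have Gevrey-$1/k$ Taylor series. Injectivity then follows immediately: if $T\bar f = 0$, the Gevrey-$1/k$ representatives $f_i$ are both Gevrey and flat, so Fact \ref{simple_summability}(1) gives $f_i \in \AA^k(U_i)$ and thus $\bar f = 0$. The main obstacle is the Cauchy-integral estimate itself, which is the standard technical core of Ramis--Sibuya; I would quote it from \cite{Mal-Ram} rather than derive it in full.
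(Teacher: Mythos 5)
Your argument is correct and takes essentially the same route as the paper: the ``Ramis--Sibuya upgrade'' you describe (and ultimately propose to quote from \cite{Mal-Ram}) is exactly Th\'eor\`eme 1.6 there, which the paper cites to identify $\left(\AA/\AA^k\right)\left(\S^1\right)$ with $\left(\AA_{1/k}/\AA^k\right)\left(\S^1\right)$, after which surjectivity and injectivity follow from Fact \ref{simple_summability} just as you argue. The well-definedness and gluing details you add are routine elaborations of the paper's one-line deduction.
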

	
	\begin{proof}
		By \cite[Th\'eor\`eme 1.6]{Mal-Ram}, we have $\left(\AA/\AA^k\right)\left(\S^1\right) = \left(\AA_{1/k}/\AA^k\right)\left(\S^1\right)$; the corollary then follows from Fact \ref{simple_summability}.
	\end{proof}
	
	\subsection*{Summability}
	To describe what we use from summability theory, we need the following notations: for distinct $\theta, \zeta \in \S^1$ and $k \ge 1$, we set $$d(\theta,\zeta):= \min\{\theta \ominus \zeta, \zeta \ominus \theta\} \in [0,\pi]$$ and $$V(\theta,k):= \left(\theta \ominus \frac{\pi}{2k},\theta \oplus \frac{\pi}{2k}\right);$$ so $V(\theta,k)$ is a proper subinterval of $\S^1$, and we denote its topological closure in $\S^1$ by $I(\theta,k)$.  If $d(\theta,\zeta) < \pi$, we let $U(\theta,\zeta)$ be the unique open interval in $\S^1$ with endpoints $\theta$ and $\eta$ and of length equal to $d(\theta,\eta)$.  If $d(\theta,\zeta) < \pi$, we set
	$$U(\theta, \zeta,k):= \bigcup_{\phi \in U(\theta,\zeta)} V(\phi,k);$$  note that, under these assumptions, $U(\theta,\zeta,k)$ is a proper subinterval of $\S^1$ of length greater than $\pi/k$.
	
	Let $k \ge 1$ and $F \in \Ps{C}{X}_{1/k}$.  Recall \cite[D\'efinition 1.5]{Mal-Ram} that, if $I \subseteq \S^1$ is a closed interval of length at least $\pi/k$,  then $F$ is \textbf{$k$-summable on $I$} if there exists $f\in\mathcal{A}_{1/k}(I)$ such that $Tf=F$.  By quasianalyticity, if such an $f$ exists, it
	is unique; we call it the \textbf{$k$-sum of $F$ on 
		$I$} and denote it by $\SS_I F$.

	\begin{definition}
		\label{stokes_def}
		\begin{enumerate}
			\item The series $F$ is \textbf{$k$-summable in the direction $\theta \in \S^1$} if $F$ is $k$-summable on $I(\theta,k)$.
			\item The series $F$ is \textbf{$k$-summable} if it is $k$-summable in all but finitely many directions; in this situation, the directions in which $F$ is not $k$-summable are called the \textbf{singular} directions of $F$.
			\item If $F$ is $k$-summable and $\xi,\zeta \in \S^1$ are such that $d(\xi,\zeta) < \pi$, and if the interval $U(\xi,\zeta)$ contains no singular directions of $F$ then, by analytic extension, there exists a unique $f \in \AA_{1/k}(U(\xi,\zeta,k))$ such that $f\rest{I(\theta,k)} = \SS_{I(\theta,k)} F$, for $\theta \in U(\xi,\zeta)$.  We call this $f$ the \textbf{$k$-sum of $F$ on $U(\xi,\zeta)$} and denote it by $\SS_{\xi,\zeta} F$. 
		\end{enumerate}\medskip
		
		Next, let $S \subseteq \S^1$ be finite; for $\theta \in \S^1$, we let $\theta^+(S)$ be the first element of $S \cup \{\theta \oplus \pi/2\}$, distinct from $\theta$, that lies on $\S^1$ after $\theta$ in the positive sense and, similarly, we let $\theta^-(S)$ be the first element of $S \cup \{\theta \ominus \pi/2\}$, distinct from $\theta$, that lies on $\S^1$ after $\theta$ in the negative sense.  Note that, for $\theta \in \S^1$ and $\ast \in \{+,-\}$, we have $d(\theta,\theta^\ast(S)) < \pi$ and 
		\begin{equation*}
		V(\theta,k) = U(\theta,\theta^-(S),k) \cap U(\theta,\theta^+(S),k),
		\end{equation*}
		independent of $S$.
		
		Assume now that $F$ is $k$-summable with its singular directions in $S$.  By definition, for $\theta \in \S^1$ and $\ast \in \{+,-\}$, the interval $U(\theta,\theta^\ast(S))$ contains no singular directions of $F$, so the $k$-sum $\SS_{\theta,\theta^\ast(S)}F$ is well defined.  The difference $$\Delta_\theta F := \SS_{\theta,\theta^+(S)} F - \SS_{\theta,\theta^-(S)} F$$ is defined on $V(\theta,k)$, independent of $S$ and called the \textbf{Stokes phenomenon of $F$ in the direction $\theta$}.  Note that $\Delta_\theta F = 0$ whenever $\theta \notin S$. 
		
		The tuples $\left(\SS_{\theta,\theta^+(S)} F\right)_{\theta \in \S^1}$ and $\left(\SS_{\theta,\theta^-(S)} F\right)_{\theta \in \S^1}$ are uniquely determined by $F$ and $S$.
		Moreover, by Fact \ref{simple_summability}(1), each $\Delta_\theta F$ belongs to $\AA^k(V(\theta,k))$.  It follows that the tuple $\left(\SS_{\theta,\theta^+(S)} F\right)_{\theta \in \S^1}$ represents an element in $\left(\AA_{1/k}/\AA^k\right)\left(\S^1\right)$, which we denote by $\SS F$ and call the \textbf{$k$-sum of $F$}.  Note that $\SS F$ depends only on $F$ but not on $S$.
		
		Finally, for the purposes of this paper, $F$ is called \textbf{summable} if there exists $k \ge 1$ such that $F$ is $k$-summable.
	\end{definition}
	
	\begin{remarks}
		\label{comp_rmk}
		Assume that $k \ge 1$ and $F$ is $k$-summable with its singular directions in $S$ and adopt the corresponding notations above.
		\begin{enumerate}
			\item It follows from Fact \ref{simple_summability}(2) and basic complex analysis that $F$ converges if and only if $F$ is summable and has no singular directions.  In this situation, we identify $\SS F$ with the germ at 0 of the analytic function defined by $F$.
			\item Let $G \in\Ps{C}{X}$ be \textit{convergent} and of order
			$\nu>0$.  Using Corollary \ref{ramis-sibuya}, we obtain (we leave the details to the reader) that the series $$(F \circ G)(X):= F(G(X))$$ belongs to $\Ps{C}{X}_{1/\nu k}$.  Moreover, the singular directions of $F \circ G$ belong to $S':= \bigcup_{\mu=0}^{\nu-1} (S + 2\pi\mu)/\nu$, and the corresponding sums and Stokes phenomena, for $\theta \in \S^1$ and $\ast \in \{+,-\}$, are $$\SS_{\theta,\theta^\ast(S')} (F \circ G) = \SS_{\nu\theta, (\nu\theta)^\ast(S)} F \circ \SS G$$ and $$\Delta_{\theta}(F \circ G) = \Delta_{\nu\theta} F \circ \SS G.$$
		\end{enumerate}
	\end{remarks}
	
	The next computation (Example \ref{explicit_stokes} below) is a crucial ingredient in our proof of Theorem \ref{th:main-SAT}.  Here and in Section \ref{sat}, we shall use the following:
	
	\begin{remark}
		\label{binom_rmk}
		Let $X = (X_1, \dots, X_m)$, $Y=(Y_1, \dots, Y_n)$ and $Z = (Z_1, \dots, Z_n)$, and let $F \in \Ps{R}{X,Y}$.  Then there are $B_1, \dots, B_n \in \Ps{R}{X,Y,Z}$ such that 
		\begin{equation*}
		F(X,Y) - F(X,Z) = \sum_{i=1}^n B_i(X,Y,Z) (Y_i-Z_i);
		\end{equation*}
		moreover, we have 
		\begin{equation*}
		B_i(X,Y,Y) = \frac{\partial F}{\partial Y_i}(X,Y).
		\end{equation*}
		The case $n=1$ follows from the binomial formula; for $n > 1$, proceed by induction on $n$ (simultaneously for all $m$), using the equality
		\begin{multline*}
		F(X,Y) - F(X,Z) = F(X,Y',Y_n) - F(X,Y',Z_n) \\ + F(X,Y',Z_n) - F(X,Z',Z_n),
		\end{multline*}
		where $Y':= (Y_1, \dots, Y_{n-1})$ and $Z':= (Z_1, \dots, Z_{n-1})$.  It follows, moreover, that the $B_i$ are convergent whenever $F$ is.
	\end{remark}
	
	\begin{example}[Stokes phenomena for $H$]
		\label{explicit_stokes}
		Assume that system \eqref{eq:system-gen} is in interlaced final form \eqref{eq:int-final-form}, with $r>0$, and let $H$ be its unique formal solution at 0 satisfying $H(0) = 0$.  As before, we set $$A(x):= a(x)I + x^r b(x) J,$$ and we also write $g(x,y) = \sum_{i=0}^\infty g_i(x) y^i$.  Following \cite{Ram-Sib}, each component of $H$ is $q$-summable with singular directions among the directions of the $q$th roots of the eigenvalues of $A(0)$. Since $r>0$, we have $a(0) \ne 0$ since $A(0)=a(0)I \ne 0$; hence, by assumption, $a(0)>0$. Therefore, the possible singular directions are the $q$th roots of unity $$S:= \left\{\frac{2p\pi}{q}:\ p=0,\dots,q-1\right\}.$$
		We refer to Definition \ref{stokes_def} for the corresponding sums $$\SS_{\theta,\theta^\ast(S)} H = (\SS_{\theta,\theta^\ast(S)} H_1,\SS_{\theta,\theta^\ast(S)} H_2)$$ and Stokes phenomena $$\Delta_\theta H = (\Delta_\theta H_1,\Delta_\theta H_2),$$ for $\theta \in \S^1$ and $\ast \in \{+,-\}$.
		Below, we set $Y:= (Y_1,Y_2)$ and $Z:= (Z_1,Z_2)$.  By Remark \ref{binom_rmk}, there is a convergent $B \in \mathcal{M}_2(\mathbb{R}[[X,Y,Z]])$ such that $$B(X,Y,Z)(Y-Z) =T\Theta(X,Y)-T\Theta(X,Z).$$ 
		
		Again by \cite{Ram-Sib}, $\SS_{\theta,\theta^\ast(S)} H$ is a solution of system \eqref{eq:system-gen} on $U(\theta,\theta^\ast,q)$; so $\Delta_\theta H$ is a solution of the system 
		\begin{equation}
		\label{eq:stokes_system}
		x^{q+1}y'=f_\theta(x) \cdot y
		\end{equation}
		on $V(\theta,q)$, where $f_\theta(x) := \SS B\left(x,\SS_{\theta,\theta^+(S)} H(x),\SS_{\theta,\theta^-(S)} H(x)\right)$ is a matrix with entries in $\mathcal{A}_{1/q}(V(\theta,q))$.   Thus, for 
		\begin{equation*}
		Q_a(x):= -\frac{1}{x^q}\left(\frac{a_0}{q}+\frac{a_1}{q-1}\,
		x+\dots+ \frac{a_{q-2}}2\,x^{q-2} + a_{q-1}\,x^{q-1}\right)
		\end{equation*}
		and $v \in \AA(V(\theta,q))^2$, we have that $w:= \exp(Q_a(x)) \cdot x^{a_q} \cdot v$ is a solution of system \eqref{eq:stokes_system} on $V(\theta,q)$ if and only if $v$ is a solution on $V(\theta,q)$ of the system
		\begin{equation}
		\label{eq:stokes_system2}
		x^{q-r+1}y'=\frac{f_\theta(x) - a(x)}{x^r} \cdot y.
		\end{equation} 
		Note that $T((f_\theta-a)/x^r)(X) = b(X) J + X^{q-r+1} L(X)$, where $L \in \mathcal{M}_2(\R[[X]])$; in particular, the linear part of $(f_\theta-a)/x^r$ has two distinct eigenvalues.  It follows from \cite[Theorem 12.2]{Was} that the system \eqref{eq:stokes_system2} can be diagonalized on $V(\theta,q-r) \supseteq V(\theta,q)$: there exists a holomorphic linear change of
		variables $v=C_\theta(x)u$, where $C_\theta \in \mathcal{M}_2(\AA(V(\theta,q)))$, such that $v \in \AA(V(\theta,q))^2$ satisfies \eqref{eq:stokes_system2} if and only if
		\begin{equation}
		\label{eq:stokes_system3}
		x^{q-r+1}u'=N_\theta(x)u,
		\end{equation}
		where $N_\theta \in \mathcal{M}_2(\AA(V(\theta,q)))$ is diagonal.  Moreover, from the Taylor expansion of $(f_\theta-a)/x^r$, we see that
		\begin{equation*}
		C_\theta(x) = \begin{pmatrix}1&1\\-i&i\end{pmatrix}+O(x^{q-r+1})
		\end{equation*}
		and
		\begin{equation*}
		N_\theta(x)=b(x) \begin{pmatrix}i & 0\\ 0 & -i\end{pmatrix} +O(x^{q-r+1}).
		\end{equation*}
		Setting $$Q_b(x) := \begin{cases} -\frac1{x^{q-r}} \left(\frac{b_0}{q-r} + \frac{b_1}{q-r-1}x + \cdots + b_{q-r-1} x^{q-r-1} \right) &\text{if } r < q, \\ 0 &\text{if } r=q, \end{cases}$$ the nonzero solutions of \eqref{eq:stokes_system3} are of the form $u = \mu_\theta \cdot E$, where $\mu_\theta = \diag(\mu_{\theta,1}, \mu_{\theta,2})$ with $\mu_{\theta,i} \in \AA(V(\theta,q)) \setminus \AA^0(V(\theta,q))$ and $E = \begin{pmatrix} e_{1} \\ e_{2} \end{pmatrix}$ with
		\begin{equation*}
		e_{1}(x) := \exp\big(i Q_b(x)\big) \cdot x^{ib_{q-r}} \quad\text{and}\quad e_{2}(x):= 1/e_{1}(x),
		\end{equation*}
		defined using the main branch of $\log$ on the sector $$\{z \in \C:\ |z|>0,\ \arg z \in V(\theta,q)\}.$$
		With these notations in place, we have shown that the Stokes phenomenon $\Delta_\theta H$ on $V(\theta,q)$, for singular $\theta \in \S^1$, is of the form
		\begin{equation}
		\label{eq:expression-stokes}
		\Delta_\theta H = (\exp \circ Q_a) \cdot x^{a_q} \cdot C_\theta \cdot \mu_\theta \cdot E,
		\end{equation}
		with $Q_a$, $a_q$ and $E$ depending only on the system \eqref{eq:system-gen} in interlaced final form \eqref{eq:int-final-form}, but not on the particular $\theta \in \S^1$.
	\end{example}
	
	\section{Multisummability}
	\label{multisummability}
	What happens if series of various summability orders are added or multiplied?  In general, the resulting series are not $k$-summable for any $k$; what happens instead is based on the ``relative Watson Lemma'':
	
	\begin{fact}[Proposition 2.1 of \cite{Mal-Ram}]
		\label{relative_Watson}
		Let $1/2 < k_1 < k_2$, and let $I \subseteq \S^1$ be an interval containing a closed interval of length $\pi/k_1$.  Then $\left(\AA^{k_1}/\AA^{k_2}\right)(I) = \{0\}$.
	\end{fact}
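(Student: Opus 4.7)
A section $\phi \in (\AA^{k_1}/\AA^{k_2})(I)$ is, by the description of quotient-sheaf sections recalled in Section \ref{multi}, represented by a family $(f_i)$ on an open cover $(U_i)$ of $I$, with $f_i \in \AA^{k_1}(U_i)$ and $f_i - f_j \in \AA^{k_2}(U_i \cap U_j)$ on each overlap. To prove that $\phi = 0$, I would produce, possibly after refining the cover, elements $g_i \in \AA^{k_2}(U_i)$ satisfying $f_i = g_i$ for each $i$.

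The plan is first to solve the cocycle $(f_i - f_j)$ as a coboundary inside $\AA^{k_2}$: find $g_i \in \AA^{k_2}(U_i)$ with $g_i - g_j = f_i - f_j$ on every overlap. Granted this, the functions $F_i := f_i - g_i \in \AA^{k_1}(U_i)$ (using $\AA^{k_2} \subseteq \AA^{k_1}$ for $k_2 > k_1$) agree on intersections and glue to a single element $F \in \AA^{k_1}(I)$. Restricting $F$ to the closed subinterval $J \subseteq I$ of length at least $\pi/k_1$ supplied by hypothesis, Watson's Lemma (Fact \ref{watson}) gives $F\rest{J} = 0$ on some sectorial neighbourhood of $\{0\} \times J$. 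Since $F$ is realized by a holomorphic function on a connected sectorial neighbourhood $V$ of $\{0\} \times I$ in $\C^*$, the principle of analytic continuation then forces $F \equiv 0$ on $V$; hence $f_i = g_i \in \AA^{k_2}(U_i)$, as required.

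The main obstacle is the coboundary construction underlying the previous paragraph, which amounts to the Čech-cohomological vanishing $H^1(I, \AA^{k_2}) = 0$. I would establish this via a Cauchy-Heine integral transform: for a two-element cover $\{U_1, U_2\}$ with overlap $W$ and difference $h = f_1 - f_2 \in \AA^{k_2}(W)$, integrating $\zeta \mapsto h(\zeta)/(\zeta - z)$ along a ray in $W$ produces functions $g_1, g_2$ defined on the sectors corresponding to $U_1$ and $U_2$ with $g_1 - g_2 = h$ on $W$. The classical Cauchy-Heine estimates ensure that the exponential flatness of order $k_2$ of $h$ is inherited by both $g_1$ and $g_2$, while the contractibility of the interval $I \subseteq \S^1$ (as opposed to the whole circle) rules out monodromy obstructions. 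A routine induction on the number of pieces of the cover reduces the general case to this two-piece case, concluding the argument.
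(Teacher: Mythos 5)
The statement you are proving is quoted in the paper as a Fact, imported verbatim from Proposition 2.1 of \cite{Mal-Ram}; the paper gives no proof of it, so there is no internal argument to compare with. Judged on its own, your proposal has a genuine gap at its central step. The easy part of your argument is fine: \emph{if} you can split the cocycle $(f_i-f_j)$ by elements $g_i\in\AA^{k_2}(U_i)$, then $F:=f_i-g_i$ glues to an element of $\AA^{k_1}(I)$, Watson's Lemma (Fact \ref{watson}) kills $F$ on the closed subinterval of length $\pi/k_1$, and the identity theorem kills it everywhere, giving $f_i=g_i\in\AA^{k_2}(U_i)$. But the splitting is the whole content of the lemma, and your justification of it, the vanishing $H^1(I,\AA^{k_2})=0$, is false in exactly the relevant range. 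Here $|I|\ge\pi/k_1>\pi/k_2$, and for an interval longer than $\pi/k_2$ that vanishing cannot hold: given any $\hat F\in\Ps{C}{X}_{1/k_2}$, Borel--Ritt--Gevrey (Fact \ref{simple_summability}(2)) provides local lifts $u_i\in\AA_{1/k_2}(U_i)$ with $Tu_i=\hat F$ on a cover of $I$ by short arcs, and the differences $u_i-u_j$ lie in $\AA_{1/k_2}\cap\AA^0=\AA^{k_2}$ by Fact \ref{simple_summability}(1); if every such flat cocycle split inside $\AA^{k_2}$, then $u_i-v_i$ would glue to an element of $\AA_{1/k_2}(I)$ with Taylor series $\hat F$, i.e.\ every Gevrey series of order $1/k_2$ would be $k_2$-summable on a closed subinterval of length $\pi/k_2$ of $I$ --- contradicted by any series with a singular direction there (an Euler-type series, suitably ramified). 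So the general cohomological vanishing you invoke is unavailable, and for the \emph{particular} cocycles $(f_i-f_j)$ the existence of a flat splitting is essentially equivalent to the relative Watson Lemma itself; assuming it begs the question.

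The concrete mechanism you propose also fails: the Cauchy--Heine transform does not inherit flatness. For $h\in\AA^{k_2}(W)$, the two lateral branches of $z\mapsto\frac{1}{2\pi i}\int_\gamma h(\zeta)(\zeta-z)^{-1}\,d\zeta$ do jump by $h$ across $\gamma$, but each branch has asymptotic expansion equal to the moment series $\sum_n \bigl(\frac{1}{2\pi i}\int_\gamma h(\zeta)\zeta^{-n-1}\,d\zeta\bigr)X^n$, which is Gevrey of order $1/k_2$ but in general nonzero; thus the functions produced lie in $\AA_{1/k_2}$, not in $\AA^{k_2}$. (This non-vanishing of the moments is precisely what makes Stokes phenomena, and the whole summability theory used in this paper, nontrivial.) To repair the proof you would have to use the stronger hypothesis that the $f_i$ themselves are flat of order $k_1$ in an essential way --- for instance via Borel/Laplace transforms or a Phragm\'en--Lindel\"of argument as in \cite{Mal-Ram} --- rather than a soft \v{C}ech splitting in $\AA^{k_2}$.
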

	
	To define multisummability, we use the following notation: let $J \subseteq I \subseteq \S^1$ be open intervals and $\BB \subseteq \CC$ be two sheaves on $\S^1$.  For $g \in \CC(I)$, we denote by $g\rest J$ the restriction of $g$ to $J$, and by $[g]_\BB$ the element of $(\CC/\BB)(I)$ represented by $g$.  Moreover, if $\DD$ is a third sheaf on $\S^1$ such that $\BB \subseteq \DD \subseteq \CC$, we identify $(\CC/\BB)/(\DD/\BB)$ with $\CC/\DD$ in the usual way (see \cite{Hartshorne:1977hr}).
	
	Let $1 \le k_1 <\dots <k_\mu$ and $F \in\Ps{C}{X}$, and set $k:= (k_1, \dots, k_\mu)$.  Recall \cite[D\'efinition 2.2]{Mal-Ram} that, if $I_1\supset I_2\supset\cdots 
	I_\mu$ are closed intervals on $\S^1$ such that each $I_\lambda$ has length at least $\pi/k_\lambda$ and $I:= (I_1, \dots, I_\mu)$, then
	$F$ is \textbf{$k$-summable on $I$} if $F \in \Ps{C}{X}_{1/k_1}$ and there exist $f_\lambda\in \left(\AA/\AA^{k_{\lambda+1}}\right)(I_\lambda)$, for
	$\lambda=1,\ldots,\mu-1$, and $f_\mu\in \AA(I_\mu)$ such that, if $f_0$ is the unique (see Corollary \ref{ramis-sibuya}) element of $\left(\AA/\AA^{k_1}\right)(\S^1)$ with $Tf_0 = F$, we have
	\begin{equation*}
	f_{\lambda-1}\rest{I_\lambda}\quad = \begin{cases} [f_\lambda]_{\AA^{k_\lambda}/\AA^{k_{\lambda+1}}} &\text{if } 1 \le \lambda < \mu, \\ [f_\lambda]_{\AA^{k_\lambda}} &\text{if } \lambda=\mu. \end{cases}
	\end{equation*}
	In this situation, it follows from Fact \ref{relative_Watson} that the tuple $f:= (f_1,\dots,f_\mu)$ is uniquely
	determined (\textbf{quasianalyticity}).  Thus, we call $f$ the \textbf{$k$-sum of $F$ on $I$} and, in particular, we set $\SS_{I} F := f_\mu \in \AA(I_\mu)$.

	\begin{definition}
		\label{def:multisummable}
		\begin{enumerate}
			\item Let $\theta \in \S^1$ and set $$I(\theta,k):= (I(\theta,k_1), \dots, I(\theta,k_\mu)).$$  Then $F$ is \textbf{$k$-summable} in the direction $\theta$ if $F$ is \hbox{$k$-summable} on $I(\theta,k)$.
			\item The series $F$ is \textbf{$k$-summable} if it is $k$-summable in all but finitely many directions; in this situation, the directions in which $F$ is not $k$-summable are called the \textbf{singular} directions of $F$.
			\item If $F$ is $k$-summable and $\xi,\zeta \in \S^1$ are such that $d(\xi,\zeta) < \pi$, and if the interval $U(\xi,\zeta)$ contains no singular directions of $F$ then, by analytic extension, there exists a unique $f \in \AA(U(\xi,\zeta,k_\mu))$ such that $f\rest{I(\theta,k_\mu)} = \SS_{I(\theta,k)} F$, for $\theta \in U(\xi,\zeta)$.  We call this $f$ the \textbf{$k$-sum of $F$ on $U(\xi,\zeta)$} and denote it by $\SS_{\xi,\zeta} F$.
		\end{enumerate}\medskip
		
		Let $S \subseteq \S^1$ be finite, and ssume that $F$ is $k$-summable with its singular directions in $S$.  As in the case of simple summability, for $\theta \in \S^1$ and $\ast \in \{+,-\}$, we define the \textbf{Stokes phenomenon of $F$ in the direction $\theta$} as $$\Delta_\theta F := \SS_{\theta,\theta^+(S)} F - \SS_{\theta,\theta^-(S)} F.$$ Note again that $\Delta_\theta F$ is independent of $S$, and that $\Delta_\theta F = 0$ whenever $\theta \notin S$. 
		
		By quasianalyticity, the tuples $\left(\SS_\theta^+ F\right)_{\theta \in \S^1}$ and $\left(\SS_\theta^- F\right)_{\theta \in \S^1}$ are uniquely determined by $F$ and $S$.
		Moreover, by definition, each $\Delta_\theta F$ belongs to $\AA^{k_1}(V(\theta,k_\mu))$.  It follows that the tuple $\left(\SS_\theta^+ F\right)_{\theta \in \S^1}$ represents and element in $\left(\AA/\AA^{k_1}\right)\left(\S^1\right)$, which we denote by $\SS F$ and call the \textbf{$k$-sum of $F$}.  Note that $\SS F$ depends on $F$ but not on $S$.
		
		Finally, for the purposes of this paper\footnote{If one replaces $\S^1$ by its universal covering space $\R$, all definitions and facts stated in this section are easily adapted to all tuples $k$ satisfying $k_1>0$; see \cite[Section 2]{Mal-Ram}.}, $F$ is \textbf{multisummable} if there exists a tuple $k$ as above such that $F$ is $k$-summable.
	\end{definition}
	
	\begin{remark}
		\label{multi_comp_rmk}
		It follows from quasianalyticity and basic complex analysis that $F$ converges if and only if $F$ is multisummable and has no singular directions.  
	\end{remark}  
	
	The collection of all multisummable series (as defined here) forms a subalgebra of $\Ps{C}{X}$ containing all summable series \cite[Section 2]{Mal-Ram}.  Moreover, by \cite[Proposition 2.3]{Mal-Ram}, this algebra is stable under composition on the left with convergent power series.  In a particular situation, as described next, we need a more precise statement of this kind.
	
	\subsection*{Composition of convergent with multisummable series}
	Let $m,n \in \N$ and $F \in \Ps{C}{X, X_{11}, \dots, X_{1n}, X_{21}, \dots, X_{mn}}$ be convergent; we abbreviate $$F(X,\{X_{ij}\}):= F(X, X_{11}, \dots, X_{1n}, X_{21},\dots, X_{mn}),$$ where
	$i=1,\ldots,m$ and $j=1,\ldots,n$.  Given $H_{ij} \in \Ps{C}{X}$ with $H_{ij}(0) = 0$, for each pair $(i,j)$, we set
	$$
	(F \circ \{H_{ij}\})(X):= F(X,\{H_{ij} (X)\}) \in \Ps{C}{X}.
	$$
	In this situation, if $J \subseteq \S^1$ is an interval and $h_{ij} \in \AA(J)$ are such that each $h_{ij}(0) = 0$, we write $\SS F \circ \{h_{ij}\}$ for the element $f \in \AA(J)$ represented by the function $z \mapsto \SS F(z,\{h_{ij}(z)\}):V \into \C$, for some appropriate $V \in \VV_J$. 
	
	Similarly, we need to define composition of $\SS F$ with elements of $\left(\AA/\AA^l\right)(\S^1)$: for $l \ge 1/2$, open intervals $J,J' \subseteq \S^1$, $\theta \in J \cap J'$ and $\alpha \in \AA(J)$ and $\beta \in \AA(J')$, note that $$\left([\beta]_{\AA^l}\right)_\theta = \left([\alpha]_{\AA^l}\right)_\theta \text{ if and only if } (\beta - \alpha)_\theta \in \left(\AA^l\right)_\theta.$$
	Thus, given $l > 1/2$ and $g_{ij} \in \left(\AA/\AA^{l}\right)(\S^1)$, for $1 \le i \le m$ and $1 \le j \le n$, 
	we define the \textbf{composition} $\SS F \circ \{g_{ij}\} \in \left(\AA/\AA^{l}\right)(\S^1)$ by setting, for $\theta \in \S^1$, 
	\begin{equation*}
	\left(\SS F \circ \{g_{ij}\}\right)_\theta := \left[\SS F \circ \{\alpha_{ij}\}\right]_{\AA^l}, 
	\end{equation*}
	where each $\alpha_{ij} \in \AA_\theta$ represents $(g_{ij})_\theta$.  This composition is well defined: if $\beta_{ij} \in \AA_\theta$ also represents $(g_{ij})_\theta$, then the polynomial growth of $\SS F$ implies that $$\left(\SS F \circ \{\beta_{ij}\} - \SS F \circ \{\alpha_{ij}\}\right)_\theta \in \left(\AA^l\right)_\theta,$$ as required.
	
	For the next proposition, we let $l \ge 1$ and $H_1,\ldots,H_m \in \Ps{C}{X}$ be $l$-summable, with corresponding sets $S_i \subseteq \S^1$ of singular directions and satisfying $H_i(0) = 0$.  Let also $P_1,\ldots, P_n \in\C[X]$ be polynomials satisfying $P_j(0)=0$.  We set $\nu_j:= \ord(P_j) > 0$ and denote by $1 \le k_1<k_2<\cdots<k_\mu$ the elements of the set $\{\nu_j l:\ j=1,\ldots,n\}$.
	
	By Remark \ref{comp_rmk}(2), each $H_i \circ P_j$ is $k(i,j)$-summable for some unique $k(i,j) \in \{k_1, \dots, k_\mu\}$ and $\SS(H_i \circ P_j) \in \left(\AA/\AA^{k(i,j)}\right)(\S^1)$, so that $$\left[\SS(H_i \circ P_j)\right]_{\AA^{k_1}/\AA^{k(i,j)}} \in \left(\AA/\AA^{k_1}\right)(\S^1).$$  We set $k:= (k_1, \dots, k_\mu)$ and let $S' \subseteq \S^1$ be the union of all directions associated to each $H_i \circ P_j$ as in Remark \ref{comp_rmk}(2) from the set $S_i$.  Note that, for $\theta \in \S^1$ and $i = 1, \dots, m$, we have $\theta^+(S_i) \ge \theta^+(S')$ and $\theta^-(S_i) \le \theta^-(S')$; setting $$\theta^\ast := \theta^\ast(S')$$ below, it follows that $U(\theta,\theta^\ast,k_\mu) \subseteq U(\theta,\theta^\ast(S_i),k(i,j))$ and the restriction $$\SS_{\theta,\theta^\ast}(H_i \circ P_j) \rest{U(\theta,\theta^*,k_\mu)} \in \AA(U(\theta,\theta^\ast,k_\mu))$$ is well defined.
	
	\begin{proposition}
		\label{pro:multisum}
		For $\theta \in \S^1$ and $\ast \in \{+,-\}$, the series $F \circ \{H_i \circ P_j\}$ is $k$-summable in every direction contained in $U(\theta,\theta^\ast)$ and satisfies $$\SS_{\theta,\theta^\ast} (F \circ \{H_i \circ P_j\}) = \SS F \circ \left\{ \SS_{\theta,\theta^\ast} (H_i \circ P_j) \rest{U(\theta,\theta^\ast,k_\mu)} \right\};$$
		in particular, the series $F \circ \{H_i \circ P_j\}$ is $k$-summable with singular directions among those in $S'$.
	\end{proposition}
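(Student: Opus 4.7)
The strategy is to verify directly the definition of $k$-summability of $F \circ \{H_i \circ P_j\}$ on $I(\phi, k) = (I(\phi, k_1), \dots, I(\phi, k_\mu))$, for each $\phi \in U(\theta, \theta^\ast)$. That is, I will exhibit sections $f_\lambda \in (\AA/\AA^{k_{\lambda+1}})(I(\phi, k_\lambda))$ for $\lambda = 1, \dots, \mu$ (with the convention $\AA^{k_{\mu+1}} = 0$), together with a global $f_0 \in (\AA/\AA^{k_1})(\S^1)$ whose Taylor series is $F \circ \{H_i \circ P_j\}$, satisfying the chain of compatibilities from Definition \ref{def:multisummable}. The basic geometric observation needed throughout is that, for any $\phi \in U(\theta, \theta^\ast)$ and any $k \ge 1$, one has $I(\phi, k) \subseteq U(\theta, \theta^\ast, k)$, while for $k \le k'$ one has $U(\theta, \theta^\ast, k) \supseteq U(\theta, \theta^\ast, k')$.

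The $f_\lambda$'s are built by substituting level-appropriate representatives of the $H_i \circ P_j$ into $F$. For each pair $(i,j)$ and each level $\lambda$, I construct $g_{ij}^{(\lambda)} \in (\AA/\AA^{k_{\lambda+1}})(I(\phi, k_\lambda))$ as follows: if $k(i,j) \ge k_{\lambda+1}$, then project the global $k(i,j)$-sum $\SS(H_i \circ P_j) \in (\AA/\AA^{k(i,j)})(\S^1)$ down to $(\AA/\AA^{k_{\lambda+1}})(\S^1)$ and restrict to $I(\phi, k_\lambda)$; otherwise $k(i,j) \le k_\lambda$, in which case $\SS_{\theta, \theta^\ast}(H_i \circ P_j)$ is a bona fide analytic function on $U(\theta, \theta^\ast, k(i,j)) \supseteq I(\phi, k_\lambda)$, so I take the image of its restriction in $(\AA/\AA^{k_{\lambda+1}})(I(\phi, k_\lambda))$. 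I then set $f_\lambda := \SS F \circ \{g_{ij}^{(\lambda)}\}$ via the composition defined just before the proposition; analogously, $f_0$ is defined via the projections of the $\SS(H_i \circ P_j)$ to $(\AA/\AA^{k_1})(\S^1)$.

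What remains is verification. First, $T f_0 = F \circ \{H_i \circ P_j\}$ follows because the Taylor morphism commutes with composition by a convergent series, combined with the uniqueness in Corollary \ref{ramis-sibuya}. Next, the compatibilities $f_{\lambda-1} \rest{I(\phi, k_\lambda)} = [f_\lambda]_{\AA^{k_\lambda}/\AA^{k_{\lambda+1}}}$ reduce, by functoriality of the composition, to the analogous equalities for each $g_{ij}$ in $(\AA/\AA^{k_\lambda})(I(\phi, k_\lambda))$; the delicate case is $k(i,j) = k_\lambda$, where one must observe that the global section $\SS(H_i \circ P_j)$ is represented on $I(\phi, k_\lambda)$ by the local sum $\SS_{\theta, \theta^\ast}(H_i \circ P_j) \rest{I(\phi, k_\lambda)}$ (which is precisely how $\SS(H_i \circ P_j)$ is assembled from its local sums in Definition \ref{stokes_def}). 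At the top level, unravelling the construction shows that $f_\mu$ is the restriction to $I(\phi, k_\mu)$ of $\SS F \circ \{\SS_{\theta, \theta^\ast}(H_i \circ P_j) \rest{U(\theta, \theta^\ast, k_\mu)}\}$; since $\phi$ ranges over all of $U(\theta, \theta^\ast)$, this identifies $\SS_{\theta, \theta^\ast}(F \circ \{H_i \circ P_j\})$ as claimed. The ``in particular'' clause follows at once, since every $\phi \notin S'$ sits inside some $U(\theta, \theta^+(S'))$ and hence is a $k$-summability direction.

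The main obstacle is organizing the bookkeeping cleanly at the interface between the two cases $k(i,j) \ge k_{\lambda+1}$ and $k(i,j) \le k_\lambda$, so that the composition is coherent across levels and restricts compatibly to the nested intervals $I(\phi, k_\lambda)$. Once the sheaf-theoretic setup is correctly established, each individual verification is essentially formal, amounting to unravelling the definitions of the quotient-sheaf sections and of the composition $\SS F \circ \{\cdot\}$ with $F$ convergent.
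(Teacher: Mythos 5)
Your proposal is correct and follows essentially the same route as the paper: for each $\phi \in U(\theta,\theta^\ast)$ one builds level-$\lambda$ sums of the $H_i \circ P_j$ on $I(\phi,k_\lambda)$ by the same case distinction (projection of the global $k(i,j)$-sum versus restriction of the directional sum $\SS_{\theta,\theta^\ast}(H_i\circ P_j)$), composes them with $\SS F$, and reads off the top-level sum $f_\mu$. The only differences are cosmetic: your split at the boundary $k(i,j)=k_{\lambda+1}$ and your explicit treatment of $f_0$ and of the interface case $k(i,j)=k_\lambda$ merely spell out what the paper dismisses as straightforward.
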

	
	\begin{proof}
		We fix $\theta$, $\ast$ and $\phi \in U(\theta,\theta^\ast)$.  For $i\le m$, $j\le n$ and $\lambda \le \mu$, we define a sum $h^\lambda_{ij}$ of $H_i\circ P_j$ on the interval $I(\phi,k_\lambda) \subseteq U(\theta,\theta^\ast,k_\lambda)$ such that $h^\mu_{ij} \in \AA(I(\phi,k_\mu))$ and $h^\lambda_{ij} \in \left(\AA/\AA^{k_{\lambda+1}}\right)(I(\phi,k_\lambda))$ for $\lambda < \mu$, as follows:
		\begin{equation*}
		h^\lambda_{ij} := \begin{cases}
		\SS_{\theta,\theta^\ast} (H_i \circ P_j) \rest{I(\phi,k_\mu)} &\text{if } \lambda = \mu, \\
		\left[\SS_{\theta,\theta^\ast} (H_i \circ P_j)\right]_{\AA^{k_{\lambda+1}}}\rest{I(\phi,k_\lambda)} &\text{if } \lambda < \mu \text{ and } k(i,j) \le k_{\lambda+1}, \\
		[\SS(H_i \circ P_j)]_{\AA^{k_{\lambda+1}}/\AA^{k(i,j)}}\rest{I(\phi,k_\lambda)} &\text{if } k(i,j)> k_{\lambda+1}.
		\end{cases}
		\end{equation*}
		Then, for $1 \le \lambda \le \mu$, we set $f_\lambda:= \SS F \circ \left\{h^\lambda_{ij}\right\}$; in particular, $$f_\mu = \SS F \circ \left\{\SS_{\theta,\theta^\ast} (H_i \circ P_j) \rest{I(\phi,k_\mu)}\right\}$$ by definition.  It is straightforward from this definition that $F \circ \{H_i \circ P_j\}$ is $k$-summable in the direction $\phi$ with $k$-sum $(f_1, \dots, f_\mu)$ on $I(\phi,k)$.  The proposition now follows, since $\SS_{I(\phi,k)}(F \circ \{H_i \circ P_j\}) = f_\mu$ in this case.
	\end{proof}

	\section{Putting it all together} \label{sat}
	
	This section is devoted to the proof of Theorem \ref{th:main-SAT}; so we assume that system \eqref{eq:system-gen} is in interlaced final form \eqref{eq:int-final-form}, and we let $H$ be its unique divergent formal solution at 0 satisfying $H(0) = 0$.  By Remark \ref{0-sat_reduction}, it suffices to show that $H$ is 0-SAT.  As justified after the statement of Theorem \ref{th:main-SAT}, we shall assume throughout this proof that $r>0$.  We adopt all the notations introduced in Example \ref{explicit_stokes}.
	
	We now let $n \in \N$, $F \in \Ps{R}{X,Z}$ be nonzero and convergent, with $$Z = (Z_{ij})_{1 \le i \le 2, 1 \le j \le n},$$ and let $P_1,\dots,$ $P_n \in \R[X]$ be positive and $q$-short of orders $\nu_1, \dots, \nu_n>0$, respectively, and we adopt all corresponding notations introduced for Proposition \ref{pro:multisum} (with $m=2$, $l$ there equal to $q$ here and $S_i$ there equal to $S$ here) and Definition \ref{def:multisummable}.  We assume, for a contradiction, that 
	\begin{equation}
	\label{contradict_assumption}
	F \circ \{H_i\circ P_j\} =0.
	\end{equation}
	In this situation, we chose $F$ as follows: we let $\Lambda_F\subset\{1,2\}\times\{1,\ldots,n\}$ be the set of all
	indices $(i,j)$ such that $F$ depends on $Z_{ij}$, that is, the series obtained from $F$ by replacing the indeterminate $Z_{ij}$ with 0 is different from $F$.  Replacing $F$ if necessary, we may assume the cardinal $|\Lambda_F|$ is minimal among all non-zero convergent $F$ satisfying \eqref{contradict_assumption}, and we let $\FF$ be the set of all nonzero convergent power series $G(X,Z)$ such that $G \circ \{H_i \circ P_j\} = 0$ and $|\Lambda_G| = |\Lambda_F|$.
	
	The following lemma also appears on p. 437 of the proof of \cite[Theorem 4.4]{Rol-San-Sch}; we include its proof here for completeness' sake.
	
	\begin{lemma}\label{lm:partial-f-non-zero}
		Let $(i_0,j_0) \in \Lambda_F$.  There exists $G \in \FF$ such that
		\begin{equation*}
		\left(\partial G/\partial Z_{i_0j_0}\right) \circ \{H_i \circ P_j\} \ne 0.
		\end{equation*}
	\end{lemma}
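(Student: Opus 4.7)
The plan is to argue by contradiction: assume that every $G \in \FF$ satisfies $(\partial G/\partial Z_{i_0 j_0}) \circ \{H_i \circ P_j\} = 0$. First I would show by induction on $k \ge 0$ that $G_k := \partial^k F/\partial Z_{i_0 j_0}^k$ satisfies $G_k \circ \{H_i \circ P_j\} = 0$. The base case $k = 0$ is \eqref{contradict_assumption}. For the inductive step, if $G_k = 0$ the conclusion is trivial; if $G_k \ne 0$, observe that differentiation introduces no new variables, so $\Lambda_{G_k} \subseteq \Lambda_F$, hence $|\Lambda_{G_k}| \le |\Lambda_F|$. The minimality of $|\Lambda_F|$ applied to the nonzero series $G_k$ (whose composition vanishes by the inductive hypothesis) forces the reverse inequality, so $|\Lambda_{G_k}| = |\Lambda_F|$ and $G_k \in \FF$; applying the standing hypothesis to $G_k$ then yields $G_{k+1} \circ \{H_i \circ P_j\} = 0$.

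Next, I would expand $F$ as a Taylor series in $Z_{i_0 j_0}$, namely $F(X, Z) = \sum_{l \ge 0} F_l(X, Z') Z_{i_0 j_0}^l$, where $Z' := (Z_{ij})_{(i,j) \ne (i_0, j_0)}$ and each $F_l$ is a convergent power series in $(X, Z')$. Setting $\psi := H_{i_0} \circ P_{j_0}$ (which satisfies $\psi(0) = 0$) and $\phi_l := F_l \circ \{H_i \circ P_j\}_{(i,j) \ne (i_0, j_0)} \in \Ps{C}{X}$, the vanishing identities from the first step translate into
\begin{equation*}
\sum_{l \ge k} \binom{l}{k}\, \phi_l\, \psi^{l-k} = 0 \quad\text{for every } k \ge 0.
\end{equation*}
Because $\psi(0) = 0$, the substitution $Y \mapsto \psi + W$ defines a $\C$-algebra isomorphism $\Ps{C}{X, Y} \cong \Ps{C}{X, W}$, with inverse $W \mapsto Y - \psi$. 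Applying it to $\Phi(X, Y) := \sum_l \phi_l Y^l$, the formal binomial expansion shows that the identities above are equivalent to $\Phi(X, \psi + W) = 0$; injectivity of the isomorphism then forces $\Phi = 0$, i.e., $\phi_l = 0$ for every $l \ge 0$.

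To conclude, since $F$ genuinely depends on $Z_{i_0 j_0}$, some $F_l$ with $l \ge 1$ is nonzero. For such an $l$, the series $F_l$ is a nonzero convergent power series in $(X, Z')$ not involving $Z_{i_0 j_0}$, so $|\Lambda_{F_l}| \le |\Lambda_F| - 1$; yet $F_l \circ \{H_i \circ P_j\} = \phi_l = 0$, contradicting the minimality of $|\Lambda_F|$. The only step requiring some care is the formal Taylor change of variables in the middle paragraph, but this is routine once one observes that $\psi$ has positive $X$-adic valuation, making both $Y \mapsto \psi + W$ and its inverse continuous substitutions.
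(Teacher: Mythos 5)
Your proof is correct and takes essentially the same route as the paper's: expand $F$ in powers of $Z_{i_0j_0}$, use the Taylor expansion around $\psi = H_{i_0}\circ P_{j_0}$ (your substitution isomorphism) to conclude that every coefficient $F_l$ has vanishing composition, and then contradict the minimality of $|\Lambda_F|$. The only cosmetic difference is that you assume the negation of the lemma and propagate it by induction through membership of the successive derivatives in $\FF$, whereas the paper directly proves there is a minimal $d\ge 1$ with $\left(\partial^d F/\partial Z_{i_0j_0}^d\right)\circ\{H_i\circ P_j\}\ne 0$ and takes $G:=\partial^{d-1}F/\partial Z_{i_0j_0}^{d-1}$.
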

	\begin{proof}
		Let  $(i_0,j_0) \in \Lambda_F$, and let $H_{i_0j_0}$ be the tuple obtained from the tuple $\{H_i \circ P_j\}$ after replacing the entry $H_{i_0} \circ P_{j_0}$ by the indeterminate $Z_{i_0j_0}$; in particular, $H_{i_0j_0} \in \Ps{R}{X,Z_{i_0j_0}}^{2n}$.  We claim that there exists $d\geq 1$ such that $\left(\partial^d F/\partial Z^d_{i_0j_0}\right) \circ \{H_i \circ P_j\} \ne 0$: otherwise, by Taylor expansion, the power
		series
		\begin{align*}
		F \circ H_{i_0j_0} &=
		\sum_{m\geq 0}\frac{1}{m!}\frac{\partial^m F}{\partial
			Z_{i_0j_0}^m} \circ \{H_i \circ P_j\} \cdot ((H_{i_0} \circ P_{j_0})-Z_{i_0j_0})^m \\ & = \sum_{k \ge 0} G_k(X) \cdot Z_{i_0j_0}^k
		\end{align*}
		in $\Ps{R}{X,Z_{i_0j_0}}$ is identically zero; in particular, each
		$G_k \in \Ps{R}{X}$ is zero. On the other hand, writing $Z' := \{Z_{ij}:\ (i,j) \ne (i_0,j_0)\}$, $H_{i_0j_0}' := \{H_i \circ P_j:\ (i,j) \ne (i_0,j_0)\}$ and $$F(X,Z) = \sum_{k \ge 0} F_k(X,Z') \cdot Z_{i_0j_0}^k,$$
		we see that $0 = G_k = F_k \circ H_{i_0j_0}'$ for each $k$.
		Since each $F_k$ converges and $|\Lambda_{F_k}| < |\Lambda_F|$, the minimality of $|\Lambda_F|$ implies that $F_k = 0$ for every $k$, hence $F = 0$, a contradiction. 
		
		Finally, we chose $d$ minimal such that $\left(\partial^d F/\partial Z^d_{i_0j_0}\right) \circ \{H_i \circ P_j\} \ne 0$, and we take $G:= \partial^{d-1}F/\partial
		Z_{i_0j_0}^{d-1}$.
	\end{proof}

	The rest of the proof is based on the following observation: recall that, for $\theta \in \S^1$, $\ast \in \{+,-\}$ and each $(i,j)$, the series $H_i \circ P_j$ is $k(i,j)$-summable in every direction contained in $U(\theta,\theta^\ast)$ and that $$V(\theta,k_\mu) \subseteq V(\theta,k(i,j)) = U(\theta,\theta^+,k(i,j)) \cap U(\theta,\theta^-,k(i,j)).$$
	\medskip
	
	\noindent\textbf{Claim.}
	\textit{There exist $\theta \in \S^1$ and $G \in \FF$ such that
		\begin{equation*}
		%\label{stokes_phen}
		\SS G \circ \{\SS_{\theta,\theta^+}(H_i \circ P_j) \rest{V(\theta,k_\mu)}\} - \SS G \circ \{\SS_{\theta,\theta^-} (H_i\circ P_j) \rest{V(\theta,k_\mu)}\} \ne 0.
		\end{equation*}}
	%\medskip
	
	Assuming this claim holds, we finish the proof of Theorem \ref{th:main-SAT} as follows: by Proposition \ref{pro:multisum}, the series $G \circ \left\{H_i \circ P_j\right\}$ is multisummable and satisfies
	\begin{multline*}
	\Delta_\theta(G \circ \{H_i \circ P_j\}) = \\ \SS G \circ \{\SS_{\theta,\theta^+} (H_i \circ P_j) \rest{V(\theta,k_\mu)}\} - \SS G \circ \{\SS_{\theta,\theta^-} (H_i\circ P_j) \rest{V(\theta,k_\mu)}\},
	\end{multline*}
	for $\theta \in \S^1$.  Thus, the claim implies that $G \circ \left\{H_i \circ P_j\right\}$ has at least one singular direction, so by Remark \ref{multi_comp_rmk}, the series $G \circ \left\{H_i \circ P_j\right\}$ is divergent, which contradicts the assumption that it is zero; this then proves the theorem.    
	
	\begin{proof}[Proof of the claim] 
		Let $Y = \{Y_{ij}\}$, for $(i,j) \in \{1,2\} \times \{1, \dots, n\}$; by Remark \ref{binom_rmk}, there are convergent $F_{ij} \in \Ps{R}{X,Y,Z}$ such that $$F(X,Y) - F(X,Z) = \sum_{(i,j)} F_{ij}(X,Y,Z) \cdot (Y_{ij} - Z_{ij}).$$  Therefore, for $\theta \in \S^1$, we get from Remark \ref{comp_rmk}(2) that
		\begin{align}
		\begin{split}
		\label{eq:sigmak}
		\SS F &\circ \{\SS_{\theta,\theta^+} (H_i \circ P_j) \rest{V(\theta,k_\mu)}\} - \SS F \circ \{\SS_{\theta,\theta^-} (H_i\circ P_j) \rest{V(\theta,k_\mu)}\} \\ &=\sum_{(i,j)}D_{ij,\theta} \cdot
		\left(\SS_{\theta,\theta^+} (H_i \circ P_j) \rest{V(\theta,k_\mu)} -\ 
		\SS_{\theta,\theta^-} (H_i\circ P_j) \rest{V(\theta,k_\mu)}\right) \\ &= \sum_{(i,j)} D_{ij,\theta} \cdot \Delta_\theta (H_i \circ P_j) \rest{V(\theta,k_\mu)} \\ &= \sum_{(i,j)} D_{ij,\theta} \cdot (\Delta_{\nu_j\theta} H_i \circ P_j) \rest{V(\theta,k_\mu)},
		\end{split}
		\end{align}
		where $D_{ij,\theta} \in \AA_{1/k_1}(V(\theta,k_\mu))$ has asymptotic expansion (see Remark \ref{binom_rmk})
		\begin{equation}\label{eq:dijk-asymptotics}
		T D_{ij,\theta} = F_{ij}(X, \{H_i \circ P_j\}, \{H_i \circ P_j\}) = \frac{\partial F}{\partial Z_{ij}}(X,\{H_i\circ P_j\}),
		\end{equation}
		independent of $\theta$.  
		We now chose $\theta \in \S^1$ such that $\nu_j\theta$ is a singular direction of $H$ for at least one $j \in \{1, \dots, n\}$; since $H$ is divergent, such $\theta$ and $j$ exist by Remark \ref{comp_rmk}(1).  Setting $$\Omega:= \{j: \nu_j\theta \text{ is a singular direction of } H\},$$ we obtain from 
		(\ref{eq:expression-stokes}) that, in restriction to $V(\theta,k_\mu)$,
		\begin{multline}\label{eq:sigmak-bis}
		\sum_{(i,j)} D_{ij,\theta} \cdot (\Delta_{\nu_j\theta} H_i \circ P_j) \\ = \sum_{j \in \Omega} \exp(Q \circ P_j)) \cdot P_j^{a_q} \cdot \begin{pmatrix} D_{1j,\theta} & D_{2j,\theta} \end{pmatrix} \cdot (C_\theta \circ P_j) \cdot (\mu_\theta \circ P_j) \cdot (E \circ P_j),
		\end{multline}
		where we write $Q := Q_a$.  
		
		To finish the proof of the claim, it suffices to find $\phi \in V(\theta,k_\mu)$ such that, after replacing $F$ by a suitable $G \in \FF$, the restriction of any representative of \eqref{eq:sigmak-bis} to the ray $R_\phi:= \{re^{i\phi}:\ r>0\}$ is not zero.  From the fact that the $P_j$ are distinct $q$-short real polynomials, we obtain the following (compare with p. 441 of the proof of \cite[Theorem 4.4]{Rol-San-Sch}):  \medskip
		
		\noindent\textbf{Subclaim.} For distinct $j_1, j_2 \in \Omega$, the meromorphic function $Q \circ P_{j_1} - Q \circ P_{j_2}$ has nonzero principal part at 0. 
		
		\begin{proof}
			We write $i$ instead of $j_i$ (for readability) and $P_{i}(z) = c_{\nu_i} z^{\nu_i} + \cdots + c_{d_i} z^{d_i}$ such that $\nu_i(q+1) > d_i$ and $c_{\nu_i} > 0$, for $i = 1,2$; in particular, $\nu_i > 0$.  Then
			\begin{equation*}
			(Q \circ P_{i})(z) = - \frac {a_0}{c_{\nu_i}^q} z^{-\nu_i q} + \text{ higher order terms},
			\end{equation*}
			so the subclaim follows if $\nu_1 \ne \nu_2$, or if $\nu_1 = \nu_2$ and $c_{\nu_1} \ne c_{\nu_2}$.  So we assume from now on that $\nu:= \nu_1 = \nu_2$ and $c_{\nu_1} = c_{\nu_2}$; then there exist $c \ne 0$ and $\nu < \mu < \nu(q+1)$ such that $P:= P_{2}-P_{1}$ satisfies
			\begin{equation*}
			P(z) = c z^\mu + \text{ higher order terms.}
			\end{equation*}
			Therefore,
			\begin{equation*}
			P_{2} = P_{1} + P = P_{1} \left(1 + \frac{P}{P_1}\right) = \frac{P_1}{1+\tilde P}
			\end{equation*}
			with $\tilde P \in \Ps{C}{X}$ of order $\mu-\nu \in (0,\nu q)$.  Setting $\tilde Q(z):= -z^q Q(z) \in \R[z]$, we obtain
			\begin{align*}
			Q \circ P_2 &= -\left(\frac{1+\tilde P}{P_1}\right)^q \cdot \tilde Q \circ (P_1+P) \\ &= -\left(\frac{1+\tilde P}{P_1}\right)^q \cdot \left( \tilde Q \circ P_1 + \frac{\tilde Q^{(1)} \circ P_1}{1!} P + \cdots + \frac{\tilde Q^{(q)} \circ P_1}{q!} P^q \right) \\ &= -\left(\frac{1+\tilde P}{P_1}\right)^q \cdot \left( \tilde Q \circ P_1 + O(x^\mu) \right) \\ &= (Q \circ P_1 ) - \frac1{P_1^q} \cdot \left( q\tilde P \cdot (\tilde Q \circ P_1) + O(z^{\mu-\nu+1})  \right).
			\end{align*}
			Now note that the term $\left( q\tilde P \cdot (\tilde Q \circ P_1)\right)/P_1^q$ belongs to $\C(\!(X)\!)$ and has order $\mu-\nu-\nu q < 0$, which finishes the proof of the subclaim.
		\end{proof}
		
		By the subclaim, there exists $\phi \in V(\theta,k_\mu)$ such that the germ at 0 of the restriction $q_{j_1,j_2}$ of the real part of $Q \circ P_{j_1} - Q \circ P_{j_2}$ to $R_\phi$ satisfies $\lim_{z \to 0} |q_{j_1,j_2}(z)| = \infty$, for distinct $j_1,j_2 \in \Omega$.  Thus, there is a unique $j_0 \in \Omega$ such that $\lim_{z \to 0} q_{j,j_0}(z) = -\infty$ for all $j \in \Omega \setminus \{j_0\}$; in particular, the germ at 0 of the restriction of
		$\frac{\exp(Q(P_{j}(z))}{\exp(Q(P_{j_0}(z))}$ to $R_\phi$ is exponentially flat for each such $j$.  Therefore, dividing
		(\ref{eq:sigmak-bis}) by $\exp(Q \circ P_{j_0})$, we see that it now suffices to prove, after replacing $F$ by a suitable $G \in \FF$, that the germ at 0 of the factor 
		\begin{equation}
		\label{factor}
		\begin{pmatrix} D_{1j_0,\theta} & D_{2j_0,\theta} \end{pmatrix} \cdot (C_\theta \circ P_{j_0}) \cdot (\mu_\theta \circ P_{j_0}) \cdot (E \circ P_{j_0})
		\end{equation}
		is not zero (since, in this case, the germ at 0 of this restriction is of polynomial growth, as shown in Example \ref{explicit_stokes}).
		
		By Lemma~\ref{lm:partial-f-non-zero} and
		(\ref{eq:dijk-asymptotics}), after replacing $F$ by a suitable $G \in \FF$, there exists $m \in \N$ such that 
		$$T D_{ij_0,\theta} = \alpha_i X^{m}+ \text{ higher order terms, for } i=1,2,$$
		and $\alpha_1$ and $\alpha_2$ are real and not both 0.  Similarly, by Example \ref{explicit_stokes}, there exists $m' \in \N$ such that $$T \mu_{\theta,i} \circ P_{j_0} = \beta_i X^{m'} + \text{ higher order terms, for } i=1,2,$$ and $\beta_1,\beta_2 \in \C$ are such that $\beta_1 \beta_2 \ne 0$.  Taking into account the form of the
		matrix $C_\theta(0)$ in Example \ref{explicit_stokes}, the factor \eqref{factor} is therefore equal to $\begin{pmatrix} \delta_1 & \delta_2 \end{pmatrix} \cdot (E \circ P_{j_0})$, where 
		\begin{align}
		\label{delta_form}
		\begin{split}
		\delta_1 &=\beta_1(\alpha_1+i\alpha_2) x^{m+m'}+ \epsilon_1 \\ \delta_2 &= \beta_2(\alpha_1-i\alpha_2) x^{m+m'}+ \epsilon_2
		\end{split}
		\end{align}
		with $\epsilon_i \in \AA(V(\theta,k_\mu))$ such that $\epsilon_i = o(x^{m+m'})$ as $x \to 0$, for $i=1,2$.  Since $e_2 = 1/e_1$ and $b_0 \ne 0$ in the definition of $e_1$, we get (working in the stalk over $\theta$, say) that $\begin{pmatrix} \delta_1 & \delta _2 \end{pmatrix} \cdot (E \circ P_{j_0}) = 0$ if and only if $(e_1 \circ P_{j_0})^2 = - \delta_2/\delta_1$, which is impossible by \eqref{delta_form}.
	\end{proof}
	\bibliographystyle{alpha}
	%\bibliography{reference}

	%%
	%
	%

\end{document}